\documentclass[10pt]{amsart}
\usepackage[margin=1in]{geometry}
\usepackage{url}
\usepackage{tikz-cd}
\usetikzlibrary{arrows}
\usetikzlibrary{calc}
\usepackage{hyperref}
\hypersetup{colorlinks=true, citecolor=blue}

\usepackage[lite]{amsrefs}
\usepackage{amssymb}
\usepackage{mathrsfs}
\usepackage[shortlabels]{enumitem}
\usepackage[all,cmtip]{xy}

\makeatletter
\@namedef{subjclassname@2020}{\textup{2020} Mathematics Subject Classification}
\makeatother

\newtheorem{pro}{Proposition}[section]
\newtheorem{theo}[pro]{Theorem}
\newtheorem{defn}[pro]{Definition}
\newtheorem{lem}[pro]{Lemma}

\newtheorem{ex}[pro]{Example}
\newtheorem{re}[pro]{Remark}

\begin{document}
	
\title[Cohomological Chow Groups of varieties with isolated singularities]{Cohomological Chow Groups of codimension one of varieties with isolated singularities}
\author[Diosel L\'opez-Cruz]{Diosel L\'opez-Cruz}
\address{Instituto de Matemáticas-CU, Universidad Nacional Autónoma de México. Ciudad de México 04510, México.}
\email{diosel@im.unam.mx}
	\subjclass[2020]{14C25, 14F42, 14F43}
\keywords{Cycle complexes, higher Chow groups, cubical hyperresolutions, cohomological Chow groups}

\begin{abstract}
We compute some particular examples of cohomological Chow groups for varieties with isolated singularities. For higher-dimensional varieties, we compute the cohomological Chow groups of codimension one, provided that the dual complex associated to the normal crossing divisor is contractible. For 3-dimensional varieties, we consider a weaker condition on the dual complex, namely $H^{2}(\Gamma(E))=0$. 
\end{abstract}

\maketitle

\section{Introduction}
\noindent For a quasi-projective variety $X$ over a field $k$,  higher Chow groups are defined in terms of algebraic cycles. In \cite{BLO}, Bloch defines its {\it cycle complexes} $\mathcal{Z}^{r}(X, \bullet)$, and the higher Chow groups ${\rm CH}^{r}(X,m)$ as the $m^{\rm th}$-homology of these cycle complexes. A fundamental property of higher Chow groups is their relation to motivic cohomology:
$${\rm CH}^{r}(X, m) \otimes \mathbb{Q} \cong H^{2r-m}_{\mathcal{M}}(X, \mathbb{Q}(r)),$$ if $X$ is smooth  \cite[Theorem 19.1]{MVW}. In the singular case, higher Chow groups do not form a cohomology theory, but a Borel-Moore homology. A cohomological version is given as an iterative construction in terms of resolution of singularities (cubical hyperresolutions) and cycle complexes by Hanamura in \cite{Han}. Both theories coincide in the smooth case, but in general Hanamura's construction of cohomological higher Chow groups coincides with the Friedlander-Voevodsky motivic cohomology defined in terms of cdh-topology \cite{FV}. Let $X$ be a singular quasi-projective variety over a field $k$ that admits resolution of singularities. A semi-simplicial hyperresolution $a\colon X_{\bullet} \rightarrow X$ in the sense of \cite{GNPP} is a semi-simplicial scheme consisting of smooth schemes $X_{p}$ for $0 \leq p \leq n$ with some $n \in \mathbb{Z}_{\geq 0}$, together with morphisms $d_{p,i}\colon X_{p} \rightarrow X_{p-1}$ for $0 \leq i \leq p$, called face maps:
$$X_\bullet = \Bigl\{\begin{tikzcd}[column sep=2em]
	\cdots \ar[shift left=0.9em]{r}\ar[shift left=0.3em]{r}\ar[shift right=0.3em]{r}\ar[shift right=0.9em]{r} & X_2 \ar[shift left=0.6em]{r}{d_{2,0}}\ar{r}[description]{d}\ar[shift right=0.6em]{r}[swap]{d_{2,2}} & X_1 \ar[shift left=0.3em]{r}{d_{1,0}}\ar[shift right=0.3em]{r}[swap]{d_{1,1}} & X_0
\end{tikzcd}\Bigr\} \xrightarrow{a} X$$ such that $d_{p,i} \circ d_{p+1,i}=d_{p,i} \circ d_{p+1,i+1}$ with $0 \leq i \leq p$. The semi-simplicial hyperresolution induces a fourth quadrant double complex
\[ \begin{tikzcd}
	\mathcal{Z}^{r}(X_{\bullet}, \bullet) : \mathcal{Z}^{r}(X_{0}, \bullet) \ar{r}{d^{\ast}} & \mathcal{Z}^{r}(X_{1}, \bullet) \ar{r}{d^{\ast}} & \cdots \ar{r}{d^{\ast}} & \mathcal{Z}^{r}(X_{n}, \bullet)
\end{tikzcd} \] where in the $p$-th position we have Bloch's cycle complex of $X_{p}$, and the horizontal differentials $d^{\ast}:=\sum (-1)^{i}d^{\ast}_{i}\colon \mathcal{Z}^{r}(X_{p}, -q) \rightarrow \mathcal{Z}^{r}(X_{p+1}, -q)$ are the alternating sums of the pull-backs. The {\it cohomological cycle complex} $\mathcal{Z}^{r}(X_{\bullet}, \bullet)^{\ast}$ of $X$ is the total complex of the above double complex, and the {\it cohomological Chow group} (or {\it motivic cohomology}) is by definition the $m^{\rm th}$-homology of this complex: ${\rm CHC}^{r}(X,m)=H_{m}\big(\mathcal{Z}^{r}(X_{\bullet}, \bullet)^{\ast}\big)$. There is an associated spectral sequence
$$E^{p,q}_{1}(r):={\rm CH}^{r}(X_{p},-q) \Rightarrow {\rm CHC}^{r}(X, -p-q)$$ which does not depend of the choice of semi-simplicial hyperresolution $a\colon X_{\bullet} \rightarrow X$ \cite{Han}. Let $X$ be a normal variety of dimension $d \geq 3$ (then the singular locus $X_{\rm sing}$ is of codimension $\geq 2$). The resolution of singularities of $X$ along $X_{\rm sing}$ induces a commutative square
\[ \begin{tikzcd} 
E=p^{-1}(X_{\rm sing}) \ar{r} \ar{d} & \widetilde{X} \ar{d}{p} \\ X_{\rm sing} \ar{r} & X
\end{tikzcd} \] where $p$ is a birational proper map, and $E$ is a normal crossing divisor. The cohomological cycle complex of $X$ is:
$$\mathcal{Z}^{r}(X, \bullet)^{\ast}={\rm Cone}\big\{\mathcal{Z}^{r}(\widetilde{X}, \bullet) \oplus \mathcal{Z}^{r}(X_{\rm sing}, \bullet)^{\ast} \ \rightarrow \ \mathcal{Z}^{r}(E, \bullet)^{\ast}\big\}[-1].$$

Then, we have an exact sequence of Chow cohomology groups:
\[ \begin{tikzcd}
	\cdots \ar{r} & {\rm CHC}^{r}(X, m) \ar{r} & {\rm CH}^{r}(\widetilde{X},m) \oplus {\rm CHC}^{r}(X_{\rm sing}, m)\ar{r} & {\rm CHC}^{r}(E, m) \ar{r} & \cdots 
\end{tikzcd} \]

Let us recall that when $X$ is a smooth, the Bloch's higher Chow groups vanish in negative degrees. In codimension $r=1$, we have \cite{BLO}:
\begin{displaymath}
	{\rm CH}^{1}(X,m)= \left\{ \begin{array}{lll}
		{\rm Pic}(X), & \quad m=0 \\
		\Gamma(X, \mathcal{O}^{\ast}_{X}), & \quad m=1 \\
		0, & \quad m > 1.
	\end{array} \right. 
\end{displaymath} 

This fact implies, for any variety $X$, that ${\rm CHC}^{1}(X,m)=0$ for $m > 1$ (Proposition \ref{codim}). This group generally does not vanish in negative degrees. In this work we will focus on cohomological Chow groups in the case of codimension one. In particular we will review what happens with complex projective varieties $X$ of dimension $n \geq 3$ (with special emphasis on $n=3$), with singular locus $X_{\rm sing}$ of dimension zero (isolated singularities). The main results we prove in this work are the following:

\begin{theo}{\rm (=Theorem \ref{te})}
Let $X$ be an irreducible projective variety of dimension 3 over $\mathbb{C}$ with isolated singularities, and with dual complex associated to the normal crossing divisor $E$ such that $H^{2}(\Gamma)=0$. Then ${\rm CHC}^{1}(X, m)=0$ for $m \neq -2,-1, 0, 1$, ${\rm CHC}^{1}(X, 1)=\mathbb{C}^{\ast}$, and there is an exact sequence
\[ \begin{tikzcd}
0 \ar{r} & {\rm CHC}^{1}(X) \ar{r} & {\rm CH}^{1}(\widetilde{X}) \ar{r} & {\rm CHC}^{1}(E) \ar{r} & {\rm CHC}^{1}(X,-1) \ar{r} & 0 
\end{tikzcd} \]
\end{theo}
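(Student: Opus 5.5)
We plan to use the long exact sequence coming from the cone description of $\mathcal{Z}^{1}(X,\bullet)^{\ast}$ in the introduction. The singular locus $X_{\rm sing}$ is a finite set of points, so it is smooth. Hence ${\rm CHC}^{1}(X_{\rm sing},m)={\rm CH}^{1}(X_{\rm sing},m)$. Codimension one cycles on a $0$-dimensional scheme vanish, so these groups are $0$ for all $m$. The sequence therefore reads
\[ \cdots \to {\rm CHC}^{1}(X,m)\to {\rm CH}^{1}(\widetilde{X},m)\to {\rm CHC}^{1}(E,m)\to {\rm CHC}^{1}(X,m-1)\to\cdots \]
The work is to compute ${\rm CHC}^{1}(E,m)$ for all $m$. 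We use the spectral sequence $E_{1}^{p,q}={\rm CH}^{1}(E_{p},-q)\Rightarrow{\rm CHC}^{1}(E,-p-q)$, with $E_{p}$ the disjoint union of the $(p+1)$-fold intersections of components of $E$. These $E_p$ form a semi-simplicial resolution of $E$, since $E$ is a normal crossing divisor.

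Since $\dim E=2$, the strata $E_{0},E_{1},E_{2}$ are smooth surfaces, curves and points, and $E_{p}=\emptyset$ for $p\ge 3$. By the computation of ${\rm CH}^{1}$ of smooth varieties recalled above, only the rows $q=0$ (Pic) and $q=-1$ (units) survive. The points in $E_{2}$ contribute nothing. Because $E$ is projective over $\mathbb{C}$ and the $E_{p}$ are proper, $\Gamma(E_{p},\mathcal{O}^{\ast})=(\mathbb{C}^{\ast})^{\pi_0(E_p)}$.

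The row $q=-1$ is therefore the cochain complex of the dual complex $\Gamma$ with coefficients in $\mathbb{C}^{\ast}$, i.e.\ $(\mathbb{C}^{\ast})^{E_0}\to(\mathbb{C}^{\ast})^{E_1}\to(\mathbb{C}^{\ast})^{E_2}$. Its $E_{2}$-terms are $H^{p}(\Gamma,\mathbb{C}^{\ast})$ for $p=0,1,2$. The row $q=0$ is ${\rm Pic}(E_{0})\to{\rm Pic}(E_{1})$, giving $E_{2}^{0,0}$ and $E_{2}^{1,0}$. Total degrees $m=-p-q$ thus range over $-1\le m\le 1$. Only $d_2\colon E_2^{0,0}\to E_2^{2,-1}$ can be nonzero.

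The hypothesis $H^{2}(\Gamma)=0$ enters here. We want $H^{2}(\Gamma,\mathbb{C}^{\ast})=0$. Since $\mathbb{C}^{\ast}$ is divisible, the universal coefficient theorem gives $H^{2}(\Gamma,\mathbb{C}^{\ast})\cong \mathrm{Hom}(H_{2}(\Gamma),\mathbb{C}^{\ast})$. For a $2$-dimensional complex, $H_{2}(\Gamma)$ is free, so it vanishes iff $H^{2}(\Gamma,\mathbb{Q})=0$, which we take as the meaning of the hypothesis. Then $E_{2}^{2,-1}=0$, $d_2=0$, and the spectral sequence degenerates. We obtain:
\begin{itemize}
\item ${\rm CHC}^{1}(E,1)=H^{0}(\Gamma,\mathbb{C}^{\ast})$, which is $(\mathbb{C}^{\ast})^{s}$ if $E$ has $s$ connected components (one per singular point, $s$ the number of singular points);
\item ${\rm CHC}^{1}(E,0)$ is an extension of $\ker({\rm Pic}(E_0)\to{\rm Pic}(E_1))$ by $H^{1}(\Gamma,\mathbb{C}^{\ast})$;
\item ${\rm CHC}^{1}(E,-1)={\rm coker}({\rm Pic}E_{0}\to{\rm Pic}E_{1})$;
\item ${\rm CHC}^{1}(E,m)=0$ otherwise.
\end{itemize}
Verifying this identification, especially the vanishing of $E^{2,-1}$ and the degeneration, is the main obstacle. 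Care is needed with torsion in $H_{1}(\Gamma)$, which affects $H^{1}$ but not $H^{2}$ because $\mathbb{C}^{\ast}$ is divisible.

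Now feed this into the long exact sequence. For $m\ge2$ all terms vanish. For $m\le -2$, ${\rm CH}^{1}(\widetilde X,m)=0$ and ${\rm CHC}^{1}(E,m+1)=0$ when $m+1\le -2$, so ${\rm CHC}^{1}(X,m)=0$ for $m\le-3$. At the top we get
\[0\to{\rm CHC}^{1}(X,1)\to \Gamma(\widetilde X,\mathcal{O}^{\ast})=\mathbb{C}^{\ast}\to (\mathbb{C}^{\ast})^{s}.\]
Here $\widetilde X$ is irreducible and projective, and the map is the diagonal, which is injective. Hence ${\rm CHC}^{1}(X,1)=\mathbb{C}^{\ast}$.

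Next we need ${\rm CHC}^{1}(X,0)\to{\rm CH}^{1}(\widetilde X)$ to be injective, which means the connecting map ${\rm CHC}^{1}(E,1)\to{\rm CHC}^{1}(X,0)$ is zero. This requires $\mathbb{C}^{\ast}\to(\mathbb{C}^{\ast})^{s}$ to be surjective. That holds when $s=1$. For general $s$ the cokernel $(\mathbb{C}^{\ast})^{s-1}$ would inject into ${\rm CHC}^{1}(X)$, so we expect to argue for, or reduce to, the case of one singular point; this point must be checked. Finally, exactness at ${\rm CHC}^{1}(X,-1)$ ending in $0$ follows from ${\rm CH}^{1}(\widetilde X,-1)=0$. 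The surviving group ${\rm CHC}^{1}(X,-2)\cong{\rm CHC}^{1}(E,-1)$ accounts for degree $-2$ in the list of possibly nonzero degrees.
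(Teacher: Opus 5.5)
There is a genuine gap at the top of your long exact sequence. You discard $X_{\rm sing}$ by claiming ${\rm CH}^{1}(X_{\rm sing},m)=0$ for all $m$, and this is false for $m=1$. For a point, ${\rm CH}^{1}(\mathrm{Spec}\,\mathbb{C},1)=\Gamma(\mathcal{O}^{\ast})=\mathbb{C}^{\ast}$; this is the case $m=1$ of \eqref{eqn:BLO}, since codimension-one cycles on $\mathrm{pt}\times\Delta^{1}$ are points of $\Delta^{1}$. Hence ${\rm CH}^{1}(X_{\rm sing},1)=(\mathbb{C}^{\ast})^{s}$, and the group vanishes only for $m\neq 1$. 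The paper's proof uses exactly this term: for $X_{\rm sing}=\{x\}$ its sequence begins $0\to{\rm CHC}^{1}(X,1)\to(\mathbb{C}^{\ast})^{\oplus 2}\to\mathbb{C}^{\ast}\to{\rm CHC}^{1}(X)\to{\rm CH}^{1}(\widetilde{X})$.

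Having dropped it, your row $0\to{\rm CHC}^{1}(X,1)\to\mathbb{C}^{\ast}\to(\mathbb{C}^{\ast})^{s}$ with injective diagonal map actually forces ${\rm CHC}^{1}(X,1)=0$. So ``hence ${\rm CHC}^{1}(X,1)=\mathbb{C}^{\ast}$'' is a non sequitur. The cokernel $(\mathbb{C}^{\ast})^{s-1}$ that you found obstructing injectivity of ${\rm CHC}^{1}(X)\to{\rm CH}^{1}(\widetilde{X})$ is also an artifact of the omission. Reducing to $s=1$ would not repair anything, since you would still get ${\rm CHC}^{1}(X,1)=0$.

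With the correct term, the relevant map is ${\rm CH}^{1}(\widetilde{X},1)\oplus{\rm CH}^{1}(X_{\rm sing},1)=\mathbb{C}^{\ast}\oplus(\mathbb{C}^{\ast})^{s}\to{\rm CHC}^{1}(E,1)=(\mathbb{C}^{\ast})^{s}$. It sends $(a;b_{1},\dots,b_{s})\mapsto(ab_{1}^{-1},\dots,ab_{s}^{-1})$, i.e.\ restriction from $\widetilde{X}$ minus pull-back from the points. This map is surjective (take $a=1$), and its kernel is the diagonal copy of $\mathbb{C}^{\ast}$. Therefore ${\rm CHC}^{1}(X,1)=\mathbb{C}^{\ast}$, and the connecting map ${\rm CHC}^{1}(E,1)\to{\rm CHC}^{1}(X)$ is zero, which gives the left-hand injectivity for any number of singular points.

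One caution: writing ${\rm CHC}^{1}(E,1)=H^{0}(\Gamma,\mathbb{C}^{\ast})$ with one factor per singular point uses that each fibre $p^{-1}(x)$ is connected. This holds by Zariski's main theorem when $X$ is normal; without it the surjectivity, and the injectivity statement, can fail.

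The rest of your plan agrees with the paper: the spectral sequence of $E^{[\bullet]}$ degenerating because $E_{2}^{2,-1}=0$, the resulting description of ${\rm CHC}^{1}(E,m)$, the vanishing of ${\rm CHC}^{1}(X,m)$ outside $-2\le m\le 1$, and ${\rm CHC}^{1}(X,-2)\cong{\rm CHC}^{1}(E,-1)$. Your universal-coefficient argument for $E_{2}^{2,-1}=0$ is in fact a little more careful than the paper's.
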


In higher dimensions we have:

\begin{theo}{\rm (=Proposition \ref{tee})}
	Let $E$ be a simple normal crossing divisor on a normal variety $X$ of dimension $d$, with associated dual complex $\Gamma(E)$ contractible. Then:
	\begin{displaymath}
		{\rm CHC}^{1}(E,m)= \left\{ \begin{array}{lll}
			\mathbb{C}^{\ast}, & \quad {\rm if} \ m=1 \\
			H^{-m}({\rm CH}^{1}(E^{[\bullet]})), & \quad {\rm if} \ -d+2 \leq m \leq 0 \\
			0, & \quad {\rm otherwise}.
		\end{array} \right. 
	\end{displaymath}
\end{theo}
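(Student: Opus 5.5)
The plan is to compute ${\rm CHC}^{1}(E,m)$ from the spectral sequence $E^{p,q}_{1}(1)={\rm CH}^{1}(E^{[p]},-q)\Rightarrow {\rm CHC}^{1}(E,-p-q)$. I will use the semi-simplicial hyperresolution of $E$ given by the strata of the simple normal crossing divisor. Write $E=\bigcup_{i\in I} E_i$ and $E^{[p]}=\coprod_{|J|=p+1} E_J$, where $E_J=\bigcap_{j\in J}E_j$ is smooth of dimension $d-1-p$. The face maps are the inclusions. Since each $E^{[p]}$ is smooth, the codimension one computation recalled in the introduction applies. It says that ${\rm CH}^{1}(E^{[p]},m)$ vanishes unless $m=0,1$, where it equals ${\rm Pic}(E^{[p]})$ and $\Gamma(E^{[p]},\mathcal{O}^{\ast})$ respectively. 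So the $E_1$ page has only two nonzero rows, $q=0$ and $q=-1$, with the differential $d_1$ horizontal in $p$.

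First I handle the row $q=-1$. Its terms are $\Gamma(E^{[p]},\mathcal{O}^\ast)=\bigoplus_{|J|=p+1}\Gamma(E_J,\mathcal{O}^\ast)$. Assuming the $E_J$ are projective (or at least proper and connected), these global units are constant, so each term is $(\mathbb{C}^\ast)^{\#\{\text{connected components of }E^{[p]}\}}$. The horizontal complex is then exactly the simplicial cochain complex of the dual complex $\Gamma(E)$ with coefficients in $\mathbb{C}^\ast$. Its cohomology is $H^{p}(\Gamma(E),\mathbb{C}^\ast)$, which by contractibility is $\mathbb{C}^\ast$ for $p=0$ and $0$ otherwise. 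Hence $E_2^{p,-1}=\mathbb{C}^\ast$ for $p=0$ and vanishes for $p>0$. Each connected component of an $E_J$ is one simplex, so it is enough to work with components; non-connected intersections cause no difficulty.

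Next, the row $q=0$ gives $E_2^{p,0}=H^{p}({\rm CH}^{1}(E^{[\bullet]}))$ for $0\le p\le d-1$. The term $E^{[p]}$ has dimension $d-1-p$. When it has dimension $0$, that is for $p=d-1$, we get ${\rm Pic}=0$, so only $p\le d-2$ contributes. The only possible $d_2$ differential is $E_2^{p,-1}\to E_2^{p+2,-2}$, which lands in the zero row $q=-2$. Likewise, a differential into row $-1$ from row $0$ goes $E_2^{p,0}\to E_2^{p+2,-1}$, where the target is zero since $p+2>0$. So the spectral sequence degenerates at $E_2$. With total degree $m=-p-q$, row $-1$ contributes $\mathbb{C}^\ast$ in degree $m=1$, and row $0$ contributes $H^{-m}({\rm CH}^1(E^{[\bullet]}))$ for $-(d-2)\le m\le 0$. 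Each total degree receives a contribution from only one row, so there are no extension problems, and this gives the stated formula.

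The main obstacle I expect is justifying the identification of the unit row with the cochains of $\Gamma(E)$. This needs the global units of the strata to be exactly the constants, which requires properness of the $E_J$; that holds in the intended setting where $E$ is the exceptional divisor over isolated singularities of a projective variety. It also needs the signs of $d^\ast=\sum(-1)^i d_i^\ast$ to match the simplicial coboundary of $\Gamma(E)$. I would check both carefully, and if necessary state the properness hypothesis explicitly.
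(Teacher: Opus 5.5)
Your proposal is correct and follows essentially the same route as the paper. Both arguments use the two-row spectral sequence ${\rm CH}^{1}(E^{[p]},-q)\Rightarrow {\rm CHC}^{1}(E,-p-q)$ and identify the unit row with $C^{\bullet}(\Gamma)\otimes\mathbb{C}^{\ast}$ (the paper's Lemma \ref{lem}); contractibility of $\Gamma(E)$ then kills $E_{2}^{p,-1}$ for $p>0$, so $d_{2}$ vanishes and each total degree receives a single contribution. Your caveat about properness of the strata is apt, since Lemma \ref{lem} tacitly assumes the components are smooth projective so that their global units are constant.
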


\begin{theo}{\rm (=Theorem \ref{gen})}
Let $X$ be a complex projective variety of dimension $d$, with isolated singularities $X_{\rm sing}$. Let $p\colon \widetilde{X} \rightarrow X$ be a resolution of singularities, such that the associated dual complex $\Gamma(E)$ is contractible. Then ${\rm CHC}^{1}(X,m) \neq 0$ for $m=1, 0, -1, \ldots, d-1$, ${\rm CHC}^{1}(X, 1)=\mathbb{C}^{\ast}$, and there is an exact sequence:
\[ \begin{tikzcd}
	0 \ar{r} & {\rm CHC}^{1}(X) \ar{r} & {\rm CH}^{1}(\widetilde{X}) \ar{r} & {\rm CHC}^{1}(E) \ar{r} & {\rm CHC}^{1}(X, -1) \ar{r} & 0 
\end{tikzcd} \] 
\end{theo}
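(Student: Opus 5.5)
We read off the result from the long exact sequence of the resolution square introduced above, using the description of ${\rm CHC}^{1}(E,m)$ from Proposition \ref{tee}. Since $X$ has isolated singularities, $X_{\rm sing}$ is a finite set of points, hence smooth of dimension zero. Its cycle complex in codimension one therefore vanishes: ${\rm CH}^{1}(X_{\rm sing},m)=0$ for all $m$, so ${\rm CHC}^{1}(X_{\rm sing},m)=0$. The long exact sequence then becomes
\[ \cdots \to {\rm CHC}^{1}(E,m+1) \to {\rm CHC}^{1}(X,m) \to {\rm CH}^{1}(\widetilde{X},m) \to {\rm CHC}^{1}(E,m) \to {\rm CHC}^{1}(X,m-1) \to \cdots \]
For $\widetilde{X}$ smooth we use Bloch's computation: ${\rm CH}^{1}(\widetilde{X},0)={\rm Pic}(\widetilde{X})$, ${\rm CH}^{1}(\widetilde{X},1)=\Gamma(\widetilde{X},\mathcal{O}^{\ast})=\mathbb{C}^{\ast}$ because $\widetilde{X}$ is projective and irreducible, and ${\rm CH}^{1}(\widetilde{X},m)=0$ otherwise, including for negative $m$.

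Next we treat the degrees in order. For $m\geq 2$, Proposition \ref{codim} gives vanishing. For $m\leq -2$, both $\widetilde{X}$ terms vanish, so ${\rm CHC}^{1}(X,m)\cong {\rm CHC}^{1}(E,m+1)$. By Proposition \ref{tee} this is $H^{-m-1}({\rm CH}^{1}(E^{[\bullet]}))$ in the appropriate range, which accounts for the nonzero groups in negative degrees.

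The top degrees give the segment
\[ 0={\rm CHC}^{1}(E,2)\to {\rm CHC}^{1}(X,1)\to \mathbb{C}^{\ast}\to {\rm CHC}^{1}(E,1)=\mathbb{C}^{\ast}\to {\rm CHC}^{1}(X,0)\to {\rm Pic}(\widetilde{X})\to {\rm CHC}^{1}(E,0)\to {\rm CHC}^{1}(X,-1)\to {\rm CH}^{1}(\widetilde{X},-1)=0. \]
The key point is the map $\mathbb{C}^{\ast}=\Gamma(\widetilde{X},\mathcal{O}^{\ast})\to {\rm CHC}^{1}(E,1)=\mathbb{C}^{\ast}$. We identify ${\rm CHC}^{1}(E,1)$ with $\ker\big(\bigoplus_i \mathcal{O}^{\ast}(E_i)\to\bigoplus_{i<j}\mathcal{O}^{\ast}(E_{ij})\big)$, the constants that agree along intersections; this uses that $\Gamma(E)$ is connected, which holds since it is contractible. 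Under this identification the map is restriction of constants, i.e.\ the diagonal map. It is therefore an isomorphism, which justifies recording ${\rm CHC}^{1}(E,1)=\mathbb{C}^{\ast}$ as in Proposition \ref{tee}.

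Exactness then forces ${\rm CHC}^{1}(X,1)\cong\mathbb{C}^{\ast}$, taking care of the kernel, and makes ${\rm CHC}^{1}(E,1)\to {\rm CHC}^{1}(X,0)$ the zero map. Hence ${\rm CHC}^{1}(X)\hookrightarrow {\rm CH}^{1}(\widetilde{X})$ is injective. Together with the vanishing of ${\rm CH}^{1}(\widetilde{X},-1)$, this yields exactly the stated four-term exact sequence.

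The main obstacle is the compatibility step: checking that the boundary and restriction maps in the spectral sequence really induce the diagonal $\mathbb{C}^{\ast}\to\mathbb{C}^{\ast}$. This requires tracing the cone construction through the $E_1$-page, which degenerates at $E_2$ on the relevant row for dimension reasons. A second point needing care is the nonvanishing claim in degrees $m\le -2$. Via the isomorphism above it reduces to nonvanishing of $H^{j}({\rm CH}^{1}(E^{[\bullet]}))$, and I would establish this by inspecting the Picard groups of the strata rather than in full generality.
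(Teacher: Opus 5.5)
Your overall route is the one the paper intends. The paper gives no separate proof of Theorem \ref{gen}; its argument is the long exact sequence displayed before Proposition \ref{tee} combined with Proposition \ref{tee}, exactly as in the proof of Theorem \ref{te}.

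\textbf{The error: the point $X_{\rm sing}$.} The codimension-one cycle complex of a point does not vanish. Bloch's identification ${\rm CH}^{1}(Y,1)=\Gamma(Y,\mathcal{O}^{\ast}_{Y})$ applies to $Y=\{x\}$ and gives ${\rm CH}^{1}(\{x\},1)=\mathbb{C}^{\ast}$. The paper itself records ${\rm CH}^{1}(X_{\rm sing},1)=(\mathbb{C}^{\ast})^{\oplus\#(X_{\rm sing})}$, and the proof of Theorem \ref{te} uses $(\mathbb{C}^{\ast})^{\oplus 2}\to\mathbb{C}^{\ast}$.

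With your version of the sequence, your conclusion does not follow. If $0\to{\rm CHC}^{1}(X,1)\to\mathbb{C}^{\ast}\to{\rm CHC}^{1}(E,1)=\mathbb{C}^{\ast}$ is exact and the last map is the diagonal, hence bijective, then exactness gives ${\rm CHC}^{1}(X,1)=0$, not $\mathbb{C}^{\ast}$.

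The repair is short:
\begin{enumerate}
\item Contractibility makes $\Gamma(E)$, hence $E$, connected. So $X_{\rm sing}=p(E)$ is a single point $x$.
\item Both pull-backs into ${\rm CHC}^{1}(E,1)$ land on constants. So the relevant map is $\mathbb{C}^{\ast}\oplus\mathbb{C}^{\ast}\to\mathbb{C}^{\ast}$, $(a,b)\mapsto ab^{-1}$, up to the sign convention of the alternating sum.
\item This map is surjective with kernel $\{(a,a)\}\cong\mathbb{C}^{\ast}$. Hence ${\rm CHC}^{1}(X,1)\cong\mathbb{C}^{\ast}$, and the connecting map ${\rm CHC}^{1}(E,1)\to{\rm CHC}^{1}(X)$ is still zero.
\end{enumerate}
Since ${\rm CH}^{1}(\{x\},m)=0$ for $m\neq 1$, the rest of your argument goes through unchanged. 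That covers the injectivity of ${\rm CHC}^{1}(X)\to{\rm CH}^{1}(\widetilde{X})$, the four-term sequence, and ${\rm CHC}^{1}(X,m)\cong{\rm CHC}^{1}(E,m+1)$ for $m\le -2$.

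\textbf{The nonvanishing claim.} Your plan to prove \emph{non}vanishing of ${\rm CHC}^{1}(X,m)$ in negative degrees, by inspecting Picard groups of the strata, cannot succeed, because it is false in general. Take $X$ to be the projective cone over a smooth surface $S\subset\mathbb{P}^{N}$ of degree at least two, and blow up the vertex. Then $E\cong S$ is irreducible, $\Gamma(E)$ is a point, and $E^{[t]}=\emptyset$ for $t\geq 1$. Consequently ${\rm CHC}^{1}(X,-2)\cong{\rm CHC}^{1}(E,-1)={\rm CH}^{1}(S,-1)=0$.

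The literal ``$\neq 0$ for $m=1,0,-1,\ldots,d-1$'' must be a misprint. Consistently with Theorem \ref{te}, the provable claim is ${\rm CHC}^{1}(X,m)=0$ for $m\notin\{1,0,-1,\ldots,-d+1\}$. That is exactly what your degree-by-degree analysis yields:
\begin{itemize}
\item for $m\geq 2$, by Proposition \ref{codim};
\item for $m\le -d$, by the isomorphism ${\rm CHC}^{1}(X,m)\cong{\rm CHC}^{1}(E,m+1)$ together with the vanishing in Proposition \ref{tee}.
\end{itemize}
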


\section{Preliminaries}
\noindent The Chow cohomology groups (or motivic cohomology) were introduced in \cite{Han} using cubical hyperresolutions \cite{GNPP}, and higher cycle complexes \cite{BLO}. This is a cohomological generalization of the higher Chow groups to singular varieties. In this section we will briefly outline the construction of the motivic cohomology.

\subsection{Construction of semi-simplicial hyperresolutions}
In this work, we will compute the motivic cohomology groups of complete singular varieties with isolated singularities. The main idea is to express this cohomology in terms of cohomology groups of smooth projective varieties. The classical method is the one designed by Deligne in Hodge III \cite{DIII}, and uses what are known as {\it simplicial resolutions}. In this work, we will use {\it semi-simplicial resolutions}. Recall that a {\bf semi-simplicial scheme} $X_{\bullet}$ is a family of schemes $\{X_{p}\}_{p \in \mathbb{Z}_{\geq -1}}$ together with morphisms $d_{p,i}\colon X_{p} \rightarrow X_{p-1}$ for $0 \leq i \leq p$, called face maps
$$X_\bullet = \Bigl\{\begin{tikzcd}[column sep=2em]
	\cdots \ar[shift left=0.9em]{r}\ar[shift left=0.3em]{r}\ar[shift right=0.3em]{r}\ar[shift right=0.9em]{r} & X_2 \ar[shift left=0.6em]{r}{d_{2,0}}\ar{r}[description]{d}\ar[shift right=0.6em]{r}[swap]{d_{2,2}} & X_1 \ar[shift left=0.3em]{r}{d_{1,0}}\ar[shift right=0.3em]{r}[swap]{d_{1,1}} & X_0 \ar{r}{d_{0,0}} & X_{-1}
\end{tikzcd} \Bigr\}$$ such that $d_{p,i} \circ d_{p+1,i}=d_{p,i} \circ d_{p+1,i+1}$ with $0 \leq i \leq p$. This is semi-simplicial because it only considers the face morphisms, here the degeneracies are irrelevant. By composition of any sequence of $d_{j, i}$'s, for $j=0, 1, \ldots, p$, there is a unique morphism:
$$a_{p}\colon X_{p} \rightarrow X_{-1}.$$  

An alternative to Deligne's method of constructing simplicial resolutions, is the use of {\it cubical hyperresolutions} of Navarro-Aznar \cite{GNPP}, whre the main techinical advantage is that resulting semi-simplicial scheme is finite, bounded by the dimension of the original singular variety. Cubical hyperresolutions are obtained by a recursive process of Hironaka's resolution of singularities (this being the basis for the recurrence) \cite{Hir}. Let $X$ be a singular quasi-projective variety over a field $k$ that admits resolution of singularities, with singular locus $X_{\rm sing}$. Then, there is a diagram of the form:
\[ \begin{tikzcd}
	E=p^{-1}(X_{\rm sing}) \ar{r}{j} \ar{d}[swap]{q} & \widetilde{X} \ar{d}{p} \\ X_{\rm sing} \ar{r}[swap]{i} & X
\end{tikzcd} \]
where $\widetilde{X}$ is smooth, $p$ birational and proper morphism, and $E$ is a normal crossing divisor. This first step $X^{1}_{\bullet} \rightarrow X$ is also known as a {\bf 2-resolution} in \cite{GNPP} (abstract blow-up in \cite{MVW}), and $X_{\rm sing}$, $E$ are of lower dimension than $X$. Furthermore, this diagram induces a first step of a semi-simplicial resolution associated to $X$:
\[ \begin{tikzcd}[column sep=2em]
X_{1}=E \ar[shift left=0.3em]{r}{j} \ar[shift right=0.3em]{r}[swap]{q} & X_{0}=\widetilde{X} \sqcup X_{\rm sing} \ar{r}{(p,i)} & X.
\end{tikzcd} \] 

If $E$ and $X_{\rm sing}$ are smooth, this diagram defines a cubical hyperresolution of $X$. Since $E$ and $X_{\rm sing}$ may be singular in general, but both are of dimension less than $X$, we must follow the recurrence and replace the varieties $E$ and $X_{\rm sing}$ (under a reduction process) with diagrams of smooth varieties of lower dimension $X^{2}_{\bullet} \rightarrow X^{1}_{\bullet} \rightarrow X$ (applying resolution of singularities or 2-resolutions of a compatible form). This recurrence 
$$X^{r}_{\bullet} \ \rightarrow \ \cdots \ \rightarrow \ X^{2}_{\bullet} \ \rightarrow \ X^{1}_{\bullet} \ \rightarrow \ X,$$ where $X^{n+1}_{\bullet}$ is a 2-resolution of $X^{n}_{\bullet}$ provides a way to conctruct a cubical hyperresolution of $X$, details can be found in \cite{GNPP}. A cubical hyperresolution $X_{\bullet}={\rm red}(X^{1}_{\bullet}, X^{2}_{\bullet}, \ldots, X^{r}_{\bullet}) \rightarrow X$  defined under reductions (red) in the sense of \cite[Definition I.2.12]{GNPP} naturally defines a semi-simplicial scheme \cite[2.1.6]{Gui87}, where the $p$-th component is 
$$X_{p}=\coprod_{|\alpha|=p+1} X_{\alpha},$$ and the face morphisms are those induced by the cubical hyperresolution       
$$X_\bullet = \Bigl\{\begin{tikzcd}[column sep=2em]
	\cdots \ar[shift left=0.9em]{r}\ar[shift left=0.3em]{r}\ar[shift right=0.3em]{r}\ar[shift right=0.9em]{r} & X_2 \ar[shift left=0.6em]{r}{d_0}\ar{r}[description]{d_1}\ar[shift right=0.6em]{r}[swap]{d_2} & X_1 \ar[shift left=0.3em]{r}{d_0}\ar[shift right=0.3em]{r}[swap]{d_1} & X_0
\end{tikzcd}\Bigr\} \xrightarrow{a} X.$$ 

For this, we have a more appropriate definition:

\begin{defn}{\rm \cite[9.1]{Nava}}\label{Nava}
{\em Let X be a complex algebraic variety. A {\bf semi-simplicial hyperresolution} of $X$ is a semi-simplicial scheme $a\colon X_{\bullet} \rightarrow X$ augmented over $X$, such that
	:
\begin{enumerate}
	\item[$-$] $X_{p}$ are smooth quasi-projective varieties,
	\item[$-$] the morphisms $a_{p}\colon X_{p} \rightarrow X$ (given by composition) are proper, and
	\item[$-$] $a\colon X_{\bullet} \rightarrow X$ has the cohomological descent property, i.e., for any abelian sheaf $\mathcal{F}$ on $X$, the morphism $\mathcal{F} \rightarrow R a_{\ast} a^{\ast} \mathcal{F}$ is a quasi-isomorphism.
\end{enumerate}  
}
\end{defn}

The {\bf length} of this semi-simplicial hyperresolution is defined as the maximum $p$ such that $X_{p} \neq \emptyset$.

\begin{pro}\label{nor}
Let $X$ be a projective variety over $\mathbb{C}$ of dimension $d$ with isolated singularities $X_{\rm sing}$. There exists a semi-simplicial hyperresolution $X_{\bullet} \rightarrow X$ such that ${\rm dim} \ X_{p} \leq d - p$.
\end{pro}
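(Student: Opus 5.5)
We construct the hyperresolution explicitly from a good resolution of singularities and then check the dimension bound on each simplicial degree. By Hironaka, there is a resolution $p\colon \widetilde{X} \rightarrow X$ that is an isomorphism over $X \setminus X_{\rm sing}$, with $\widetilde{X}$ smooth projective and $E=p^{-1}(X_{\rm sing})_{\rm red}$ a \emph{simple} normal crossing divisor. Write $E=E_1\cup\cdots\cup E_N$ with smooth irreducible components. For $k\geq 1$ put
$$E^{[k]}=\coprod_{i_1<\cdots<i_k} E_{i_1}\cap\cdots\cap E_{i_k},$$
which is smooth (by the simple normal crossing hypothesis) of pure dimension $d-k$, or empty. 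Since $X_{\rm sing}$ is a finite set of points, it is already smooth of dimension $0$.

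First, we define the semi-simplicial scheme. Take
$$X_0=\widetilde{X}\sqcup X_{\rm sing}, \qquad X_p=E^{[p]} \quad (p\geq 1).$$
The face maps $X_1=E^{[1]}\rightarrow X_0$ are $j$ (the inclusion into $\widetilde{X}$) and $q$ (the contraction to $X_{\rm sing}$). For $p\geq 2$, the face maps $E^{[p]}\rightarrow E^{[p-1]}$ are the inclusions $d_{p,i}$ that omit the $i$-th index. The simplicial identities hold trivially for the inclusions. At level $1\to 0$ they hold because $j\circ d_{2,0}$ and $j\circ d_{2,1}$ both factor through $\widetilde{X}$, and similarly for $q$ (everything maps to the same point). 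Equivalently, this is the semi-simplicial scheme attached by \cite{GNPP} to the cubical hyperresolution obtained from the $2$-resolution square, where $E$ is replaced by its standard cubical hyperresolution by the intersections $E^{[\bullet]}$. All maps $a_p$ are proper because everything is projective.

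The dimension bound is then immediate. We have $\dim X_0=d$, and for $p\geq1$, $\dim X_p=\dim E^{[p]}=d-p$. The term $X_{\rm sing}$ causes no trouble, since it has dimension $0\leq d$. Thus the length is at most $d$.

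The main step is cohomological descent. We would argue it in two stages.
\begin{enumerate}
\item The augmented semi-simplicial scheme $E^{[\bullet]}\rightarrow E$ has cohomological descent. For a sheaf $\mathcal{F}$ on $E$, the complex $\mathcal{F}\rightarrow \bigoplus \mathcal{F}|_{E_i}\rightarrow \bigoplus \mathcal{F}|_{E_{ij}}\rightarrow\cdots$ is exact. This is the usual Mayer--Vietoris for a finite closed cover, checked on stalks: at a point lying on exactly $s$ components, the stalk complex is the augmented cochain complex of an $(s-1)$-simplex with coefficients $\mathcal{F}_x$, which is acyclic.
\item The square formed by $E$, $\widetilde{X}$, $X_{\rm sing}$ and $X$ is a proper abstract blow-up, since $p$ is an isomorphism off $E$. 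Hence $\mathcal{F}\rightarrow Rp_*p^*\mathcal{F}\oplus i_*i^*\mathcal{F}\rightarrow R(pj)_*(pj)^*\mathcal{F}$ is a distinguished triangle. This follows from proper base change on stalks: the stalks are either over $X\setminus X_{\rm sing}$, where $p$ is an isomorphism, or at a singular point $x$, where the fibre $p^{-1}(x)$ lies inside $E$.
\end{enumerate}
Combining the two stages, using the standard lemma of \cite{GNPP} that descent is preserved when a term of a $2$-resolution is replaced by a hyperresolution of it, gives $\mathcal{F}\simeq Ra_*a^*\mathcal{F}$. This compatibility of the iterated construction is where care is needed, so we would cite \cite[I.2, I.6]{GNPP} or \cite[9.1]{Nava} rather than redo it. The one genuinely new input is the choice of a resolution whose exceptional set is simple normal crossing. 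That choice is what makes all the $E^{[p]}$ smooth, and hence allows the recursion to stop after one step with $\dim X_p\leq d-p$.
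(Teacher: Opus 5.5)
Your construction is the paper's own: your $E^{[p]}$ (the $p$-fold intersections) is the paper's $E^{[p-1]}$, so both proofs take $X_0=\widetilde{X}\sqcup X_{\rm sing}$ and $X_p$ the $p$-fold intersections for $p\geq 1$. The dimension count $\dim X_p=d-p$ is the same, and your two-stage descent argument fills in what the paper only asserts.

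But there is a genuine gap, which the paper's proof shares: as soon as two components of $E$ meet, this object is not a semi-simplicial scheme. A semi-simplicial scheme needs $p+1$ face maps out of $X_p$. The $p$-fold intersections carry only the $p$ inclusions that omit one index, so for $p\geq 2$ a face map is missing. Nor can the missing maps be supplied. The identity $d_{1,0}\circ d_{2,2}=d_{1,1}\circ d_{2,0}$ is the case $i=0$, $j=2$ of $d_id_j=d_{j-1}d_i$. It equates a map into $\widetilde{X}$ (through $d_{1,0}=j$) with a map into $X_{\rm sing}$ (through $d_{1,1}=q$), and these are disjoint pieces of $X_0$. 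Your check at level $1\to 0$ misses this because it only compares $j$-composites with $j$-composites and $q$-composites with $q$-composites. Reassigning $j$ and $q$ component by component leads to the same contradiction. For the same reason, your object is not the semi-simplicial scheme that GNPP attaches to the cubical hyperresolution. That one has redundant summands, as in the paper's own surface example $S_0=\widetilde{S}\sqcup\{s\}\sqcup\{s\}$, $S_1=E^{[1]}\sqcup E^{[0]}\sqcup\{s\}$, $S_2=E^{[1]}\sqcup E^{[1]}$. What your data correctly encodes is only the total complex ${\rm Cone}\{\mathcal{Z}^{r}(\widetilde{X},\bullet)\oplus\mathcal{Z}^{r}(X_{\rm sing},\bullet)\to\mathcal{Z}^{r}(E,\bullet)^{\ast}\}[-1]$.

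A repair that keeps both your descent argument and the bound is a semi-simplicial double mapping cylinder.
\begin{itemize}
\item Let $K_t$ be the $(t+1)$-fold intersections; this is a genuine semi-simplicial scheme over $E$ with faces $\partial_0,\dots,\partial_t$. Let $K'$ and $K''$ be two copies of it.
\item Put $X_0=\widetilde{X}\sqcup X_{\rm sing}\sqcup K_0$ and $X_p=K_p\sqcup K'_{p-1}\sqcup K''_{p-1}$ for $p\geq 1$.
\item On $K_p$ use the $\partial_i$.
\item On $K'_{p-1}$, let $d_i=\partial_i$ into $K'_{p-2}$ for $i<p$; when $p=1$, $d_0$ is $j\colon K'_0\to\widetilde{X}$. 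Let $d_p$ be the identity onto $K_{p-1}$.
\item Treat $K''$ the same way, with $q$ in place of $j$.
\end{itemize}
All identities $d_id_j=d_{j-1}d_i$ hold, and $\dim X_p=\max(d-1-p,\,d-p)=d-p$. The total complex (of any cohomological functor, or of $Ra_{p\ast}a_p^{\ast}\mathcal{F}$) is quasi-isomorphic to the cone above, because the copy of $K$ maps split-injectively and diagonally into the two cylinder parts. Your stages (1) and (2) therefore give descent. Alternatively, invoke \cite[I.2.15]{GNPP} together with \cite[2.1.6]{Gui87}, which give the bound $\dim X_p\leq d-p$ for arbitrary $X$.
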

\begin{proof}
The first step in the construction is to consider a resolution of singularities $p\colon \widetilde{X} \rightarrow X$, such that $p^{-1}(X_{\rm sing})=E=\bigcup^{k}_{i=1} E_{i}$ is a normal crossing divisor with smooth components, and smooth intersections of all orders. This induces a conmmutative square, with associated semisimplicial scheme
\[ \begin{tikzcd}[column sep=2em]
E \ar[shift left=0.3em]{r} \ar[shift right=0.3em]{r} & X_{0}=\widetilde{X} \sqcup X_{\rm sing} \ar{r} & X
\end{tikzcd} \] and with the property of cohomological descent. In our case $X_{\rm sing}$ is smooth, and the process continues resolving $E$. A semi-simplicial hyperresolution of $E$ is given by taking $$E^{[t]}:=\coprod_{|I|=t+1} E_{I}=\coprod \big(E_{i_{0}} \cap \cdots \cap E_{i_{t}}),$$ the face morphisms $d_{t, j}\colon E^{[t]} \rightarrow E^{[t-1]}$ are induced by inclusions
$$E_{i_{0}} \cap E_{i_{1}} \cap \cdots \cap E_{i_{t}} \ \hookrightarrow \ E_{i_{0}} \cap E_{i_{1}} \cap \cdots \cap \widehat{E_{i_{j}}} \cap \cdots \cap E_{i_{t}}$$ with $0 \leq j \leq t$, and this gives us a semi-simplicial scheme $E^{[\bullet]} \rightarrow E$ augmented over $E$:
\[ \begin{tikzcd}[column sep=2em]
	\cdots \ar[shift left=0.9em]{r}\ar[shift left=0.3em]{r}\ar[shift right=0.3em]{r}\ar[shift right=0.9em]{r} & E^{[2]} \ar[shift left=0.6em]{r} \ar{r} \ar[shift right=0.6em]{r} & E^{[1]} \ar[shift left=0.3em]{r} \ar[shift right=0.3em]{r} & E^{[0]} \ar{r} & E.
\end{tikzcd} \] with cohomological descent. This process defines a complete semi-simplicial hyperresolution of $X$:
\[ \begin{tikzcd}[column sep=2em]
\cdots \ar[shift left=0.6em]{r} \ar{r} \ar[shift right=0.6em]{r} & E^{[1]} \ar[shift left=0.3em]{r} \ar[shift right=0.3em]{r} & E^{[0]} \ar{d} \\ 
& & E \ar[shift left=0.3em]{r} \ar[shift right=0.3em]{r} & \widetilde{X} \sqcup X_{\rm sing} \ar{r} & X \\ \cdots \ar[shift left=0.9em]{r}\ar[shift left=0.3em]{r}\ar[shift right=0.3em]{r}\ar[shift right=0.9em]{r} & X_{2}=E^{[1]} \ar[shift left=0.6em]{r} \ar{r} \ar[shift right=0.6em]{r} & X_{1}=E^{[0]} \ar[shift left=0.3em]{r} \ar[shift right=0.3em]{r} & X_{0}=\widetilde{X} \sqcup X_{\rm sing} \ar{r} & X
\end{tikzcd} \]

\begin{tikzpicture}[overlay, remember picture]
\draw[thick] (2.5, 1.70) -- (13.2, 1.70); 
\end{tikzpicture} In this case ${\rm dim} \ X_{t}={\rm dim} \ E^{[t-1]}= \emptyset$ if $t > d$, and in general ${\rm dim} \ X_{t}={\rm dim} \ E^{[t-1]} \leq d -t$.
\end{proof}

Using \cite[I.2.15; I.6.9]{GNPP} it can be proved that every complex algebraic variety $X$ admits a semi-simplicial hyperresolution with cohomological descent, such that ${\rm dim} \ X_{p} \leq {\rm dim} \ X - p$, the length is bounded by the dimension of $X$. In \cite{Car}, Carlson  obtains results very similar to those obtained by the Barcelona school of Navarro-Aznar, using a polyhedral version. The most direct application of cubical hyperresolutions is the extension of cohomological functors first defined on the category of smooth varieties to the general category of arbitrary varieties \cite{GN}.

\subsection{Hanamura's cohomological Chow groups} 
As an application of the extension criterion of Guillén-Navarro \cite{GN}, we have the following construction. Let $X$ be a quasi-projective variety over a field $k$ that admits resolution of singularities, and consider $a\colon X_{\bullet} \rightarrow X$ its semi-simplicial hyperresolution. For each $X_{p}$ in the semi-simplicial scheme, take its Bloch's cycle complex $\mathcal{Z}^{r}(X_{p}, \bullet)$ as in \cite{BLO}, and form a fourth quadrant double complex:
$$\mathcal{Z}^{r}(X_{\bullet})^{\ast}:=\big[\mathcal{Z}^{r}(X_{0}, \bullet) \stackrel{d^{\ast}}{\longrightarrow} \mathcal{Z}^{r}(X_{1}, \bullet) \stackrel{d^{\ast}}{\longrightarrow} \cdots \stackrel{d^{\ast}}{\longrightarrow} \mathcal{Z}^{r}(X_{p}, \bullet) \stackrel{d^{\ast}}{\longrightarrow} \cdots \big]$$ whose vertical differentials $\partial_{B} \colon \mathcal{Z}^{r}(X_{p}, -q) \rightarrow \mathcal{Z}^{r}(X_{p}, -q-1)$ come from Bloch's cycle complexes; and the horizontal differentials are given by $d^{\ast}=\sum (-1)^{i}d^{\ast}_{i} \colon \mathcal{Z}^{r}(X_{p}, -q) \rightarrow \mathcal{Z}^{r}(X_{p+1}, -q)$, the alternating sums of the pull-backs. Then, this double complex can be seen as:
$$\mathcal{Z}^{p,q}_{0}(r):=\mathcal{Z}^{r}(X_{p}, -q); \quad p \geq 0, \quad q \leq 0.$$
\[ \begin{tikzcd}
	\mathcal{Z}^{r}(X_{0}) \ar{r}{d^{\ast}} & \mathcal{Z}^{r}(X_{1}) \ar{r}{d^{\ast}} & \mathcal{Z}^{r}(X_{2}) \ar{r} & ~ \\
	\mathcal{Z}^{r}(X_{0},1) \ar{u}{\partial} \ar{r}{d^{\ast}} & \mathcal{Z}^{r}(X_{1},1) \ar{r}{d^{\ast}} \ar{u}{\partial} & \mathcal{Z}^{r}(X_{2},1) \ar{u}{\partial} \ar{r} & ~ \\
	\mathcal{Z}^{r}(X_{0},2) \ar{u}{\partial} \ar{r}{d^{\ast}} & \mathcal{Z}^{r}(X_{1},2) \ar{u}{\partial} \ar{r}{d^{\ast}} & \mathcal{Z}^{r}(X_{2},2) \ar{u}{\partial} \ar{r} & ~ \\
	\ar{u} & \ar{u} & \ar{u}
\end{tikzcd} \]
We denote by $\mathcal{Z}^{r}(X, \bullet)^{\ast}$ its total complex 
$$\mathcal{Z}^{r}(X, \bullet)^{\ast}:={\rm Tot}\big(\mathcal{Z}^{r}(X_{\bullet})^{\ast}\big)={\rm Tot}\big(\mathcal{Z}^{r}(X_{0}, \bullet) \stackrel{d^{\ast}}{\longrightarrow} \mathcal{Z}^{r}(X_{1}, \bullet) \stackrel{d^{\ast}}{\longrightarrow} \cdots \stackrel{d^{\ast}}{\longrightarrow} \mathcal{Z}^{r}(X_{p}, \bullet) \stackrel{d^{\ast}}{\longrightarrow} \cdots \big)$$
and we call it the {\bf cohomological cycle complex} of $X$. 

\begin{re} 
{\em In Hanamura's construction, one chooses appropiate distinguished subcomplexes, so that the pull-back $d^{\ast}$ is well-defined \cite{Han}. The main result on the cohomological cycle complex is that it is independent of the hyperresolution up to isomorphisms in the derived category \cite[Theorem I]{Han}.}
\end{re}

\begin{defn}\cite[Definition 2.4]{Han}
{\em For a quasi-projective variety $X$, we define
$${\rm CHC}^{r}(X,m):=H^{-m}(\mathcal{Z}^{r}(X_{\bullet})^{\ast})$$ the {\bf cohomological Chow groups} (or motivic cohomology) of $X$. }
\end{defn}

By the extension criterion of Guillén-Navarro \cite[Theorem 2.1.5]{GN}, many properties that are satisfied for smooth varieties can be translated to any variety, using resolution of singularities recursively. The higher Chow groups functor ${\rm CH}^{r}(-,m) \colon {\bf Sm}(k) \rightarrow {\bf Ab}$ admits a natural extension to the contravariant functor:
$${\rm CHC}^{r}(-,m) \colon {\bf QuProj}(k) \ \rightarrow \ {\bf Ab}.$$ Such that if $X$ is smooth, then ${\rm CHC}^{r}(X,m)={\rm CH}^{r}(X,m)$. Moreover, there is a fourth quadrant spectral sequence
$$E^{p,q}_{1}(r):=H^{q}\big(\mathcal{Z}^{r}(X_{p}, \bullet) \big)={\rm CH}^{r}(X_{p}, -q) \Rightarrow H^{p+q}\big( {\rm Tot}\big(\mathcal{Z}^{r}(X_{\bullet}, \bullet)) \big)={\rm CHC}^{r}(X, -p-q)$$ where $X_{p}=\coprod_{|\alpha|=p+1} X_{\alpha}$. If $X$ is a projective singular variety over $\mathbb{C}$ with singular locus $X_{\rm sing}$, consider a resolution of singularities $p \colon \widetilde{X} \rightarrow X$ such that $E=p^{-1}(X_{\rm sing})$ is a normal crossing divisor. With the above notations and the notations of Proposition \ref{nor}, there is a fourth quadrant spectral sequence associated to the semi-simplicial hyperresolution $E^{[\bullet]} \rightarrow E$ \cite[Proposition 5.1]{Navarro}:
$$E^{a,b}_{1}(r):={\rm CH}^{r}(E^{[a]}, -b) \ \Rightarrow \ {\rm CHC}^{r}(E, -a-b); \qquad a \geq 0, \quad b \leq 0.$$

\begin{re}
{\em Note the slight difference in the notation, for Hanamura's cohomological Chow groups we use ${\rm CHC}^{\ast}(-,m)$ and for Bloch's higher Chow groups ${\rm CH}^{\ast}(-,m)$. This is the original notation of \cite{Han}.
}
\end{re}


\section{Computations: codimension one}
\noindent Let $X$ be a complex quasi-projective algebraic variety of dimension $d$, with singular locus $X_{\rm sing}$. Consider a commutative square of resolution of singularities:
\[ \begin{tikzcd} 
E=p^{-1}(X_{\rm sing}) \ar{r} \ar{d} & \widetilde{X} \ar{d} \\ X_{\rm sing} \ar{r} & X.
\end{tikzcd} \] 

The cohomological cycle complex associated to $X$ is
\begin{equation}\label{cycom}
\mathcal{Z}^{r}(X, \bullet)^{\ast}={\rm Cone}\big\{\mathcal{Z}^{r}(\widetilde{X}, \bullet) \oplus \mathcal{Z}^{r}(X_{\rm sing}, \bullet)^{\ast} \rightarrow \mathcal{Z}^{r}(E, \bullet)^{\ast}\big\}[-1]
\end{equation}

If $X_{\rm sing}$ and $E$ are smooth, we have a long exact sequence of the form
\[ \begin{tikzcd}
	\cdots \ar{r} & {\rm CH}^{r}(E,m+1)\ar{r} & {\rm CHC}^{r}(X,m) \ar{r} &
	{\rm CH}^{r}(\widetilde{X},m) \oplus {\rm CH}^{r}(X_{\rm sing},m) \ar{r} & \cdots
\end{tikzcd} \] associated to above square. If $E$ and $X_{\rm sing}$ are not smooth, the above diagram can be extended to a cubical hyperresolution $X_{\bullet} \rightarrow X$ of the form
\[ \begin{tikzcd}
E_{\bullet}	\ar{r} \ar{d} & E \ar{r} \ar{d} & \widetilde{X} \ar{d} \\ X_{\rm sing \bullet} \ar{r} & X_{\rm sing} \ar{r} & X
\end{tikzcd} \] where $(E_{\bullet}	\rightarrow X_{\rm sing \bullet})$ is a hyperresolution of $(E	\rightarrow X_{\rm sing})$. The hyperresolutions $E_{\bullet}$ and $X_{\rm sing \bullet}$ are strictly smaller than the length of $X_{\bullet}$, and the complex (\ref{cycom}) induces an exact sequence of Mayer-Vietoris type on the motivic cohomology groups:
\[ \begin{tikzcd}
	\cdots \ar{r} & {\rm CHC}^{r}(X,m)\ar{r} & {\rm CH}^{r}(\widetilde{X},m) \oplus {\rm CHC}^{r}(X_{\rm sing},m) \ar{r} & {\rm CHC}^{r}(E,m) \ar{r} & \cdots
\end{tikzcd} \]  

Here $\widetilde{X}$ is smooth, then ${\rm CHC}^{r}(X,m)={\rm CH}^{r}(X,m)$ and these groups vanish in negative degrees for codimension reasons. If $X$ is a normal variety of dimension $d$, then ${\rm codim}(X_{\rm sing}) \geq 2$, and 
$${\rm CHC}^{r}(X_{\rm sing}, m)=0, \quad {\rm if} \quad -m > d  - {\rm codim}(X_{\rm sing}) -r={\rm dim}(X_{\rm sing}) -r$$ by Proposition \ref{dim}. This construction suggests that the motivic cohomology of $X$ can be computed (recursively) in terms of the smooth components that appear in the hyperresolution. Explicitly, in terms of a smooth variety $\widetilde{X}$, and the hyperresolutions of the singular varieties $X_{\rm sing}$ and $E$. Recall that in codimension one, for $X$ smooth and irreducible, S. Bloch \cite{BLO} shows that: 
\begin{equation} \label{eqn:BLO} 
	{\rm CH}^{1}(X,m) \cong H^{2-m}_{\mathcal{M}}(X, \mathbb{Z}(1)) = \left\{ \begin{array}{lll}
		{\rm Pic}(X), & \quad m=0 \\
		\Gamma(X, \mathcal{O}^{\ast}_{X}), & \quad m=1 \\
		0, & \quad m > 1.
	\end{array} \right. 
\end{equation} 

In the singular case we have the following result:

\begin{pro}\label{codim}
${\rm CHC}^{1}(X,m)=0$ for $m \geq 2$.
\end{pro}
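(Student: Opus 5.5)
We argue with the spectral sequence attached to a semi-simplicial hyperresolution $a\colon X_{\bullet}\rightarrow X$, namely
$$E^{p,q}_{1}(1)={\rm CH}^{1}(X_{p},-q)\ \Rightarrow\ {\rm CHC}^{1}(X,-p-q),\qquad p\geq 0,\ q\leq 0.$$
Each $X_{p}$ is smooth, being a disjoint union of smooth quasi-projective varieties. Higher Chow groups of a disjoint union are the direct sum of those of its components, so Bloch's computation (\ref{eqn:BLO}) applies componentwise. It gives ${\rm CH}^{1}(X_{p},m')=0$ for $m'>1$, that is, $E^{p,q}_{1}(1)=0$ whenever $q<-1$. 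By the same description, ${\rm CH}^{1}(X_{p},m')$ also vanishes for $m'<0$, so $E_{1}^{p,q}(1)=0$ for $q>0$. This second vanishing is automatic in the fourth quadrant anyway.

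Fix $m\geq 2$ and look at the total degree $n=p+q=-m\leq -2$. The $E_{1}$ terms in this degree are $E^{p,q}_{1}(1)$ with $p+q=-m$, $p\geq 0$ and $q\leq 0$. Then $q=-m-p\leq -m\leq -2<-1$, so every such term is zero. Hence $E_{\infty}$ vanishes on the whole antidiagonal of total degree $-m$.

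The sequence converges, because the double complex $\mathcal{Z}^{1}(X_{\bullet},\bullet)$ has only finitely many nonzero columns. Indeed the hyperresolution has finite length, bounded by $\dim X$ as in Proposition \ref{nor} and the remark after it. Therefore the filtration on ${\rm CHC}^{1}(X,m)$ is finite, all its graded pieces vanish, and ${\rm CHC}^{1}(X,m)=0$.

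The only point needing care is the convergence of a fourth-quadrant spectral sequence, which follows from the finite length of the hyperresolution. A second point is that the answer does not depend on which hyperresolution is used; this is Hanamura's independence result \cite[Theorem I]{Han}, applied to any hyperresolution given by \cite{GNPP}. As a cross-check for varieties with isolated singularities, one can instead induct on dimension using the Mayer--Vietoris sequence
$$\cdots\rightarrow {\rm CHC}^{1}(E,m+1)\rightarrow {\rm CHC}^{1}(X,m)\rightarrow {\rm CH}^{1}(\widetilde{X},m)\oplus {\rm CHC}^{1}(X_{\rm sing},m)\rightarrow\cdots.$$
For $m\geq 2$, both ${\rm CH}^{1}(\widetilde{X},m)$ and ${\rm CHC}^{1}(X_{\rm sing},m)$ vanish, the latter because $X_{\rm sing}$ is a finite set of points. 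By induction on dimension, ${\rm CHC}^{1}(E,m+1)=0$ as well, so ${\rm CHC}^{1}(X,m)=0$.
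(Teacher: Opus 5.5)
Your proof is correct, but it is organized differently from the paper's.

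The paper inducts on the length of the hyperresolution, using the Mayer--Vietoris sequence of the $2$-resolution square. In the base case, where $E$ and $X_{\rm sing}$ are smooth, ${\rm CHC}^{1}(X,m)$ sits between ${\rm CH}^{1}(E,m+1)$ and ${\rm CH}^{1}(\widetilde{X},m)\oplus{\rm CH}^{1}(X_{\rm sing},m)$. Both are killed by Bloch's vanishing. The inductive step, left implicit in the paper, replaces these groups by the ${\rm CHC}^{1}$ groups of the shorter hyperresolutions.

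You instead read the vanishing off the whole $E_{1}$-page at once. The only terms that can contribute to ${\rm CHC}^{1}(X,m)$ are ${\rm CH}^{1}(X_{p},m+p)$ with $p\geq 0$, and all of these vanish when $m\geq 2$. Your route needs no induction and no Mayer--Vietoris sequence for singular $E$ and $X_{\rm sing}$, which is itself derived from the hyperresolution anyway. It uses only finite length, for convergence, and Bloch's computation on the smooth pieces. The paper's route has a different advantage: it produces the long exact sequence that the paper reuses later to compute ${\rm CHC}^{1}(X,1)$ and the groups in degrees $\leq 0$.

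One small correction to your cross-check. The induction cannot stay inside the class of varieties with isolated singularities, because $E$ is typically singular along curves or higher-dimensional strata. You should induct over all varieties of smaller dimension; then ${\rm CHC}^{1}(X_{\rm sing},m)=0$ also follows from the inductive hypothesis. With that change the cross-check is exactly the paper's argument, with dimension in place of length.
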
 
\begin{proof}
Let $X_{\bullet}  \rightarrow X$ be a cubical hyperresolution of $X$. The result is obtained by applying induction over the length of the hyperresolution. For $r=1$, consider a 2-resolution
\[ \begin{tikzcd} 
E \ar{r} \ar{d} & \widetilde{X} \ar{d} \\ X_{\rm sing} \ar{r} & X.
\end{tikzcd} \] with smooth components. Then, we have an exact sequence of cohomological Chow groups:
\[ \begin{tikzcd}
\cdots \ar{r} & {\rm CHC}^{1}(X, 2) \ar{r} & {\rm CH}^{1}(\widetilde{X},2) \oplus {\rm CH}^{r}(X_{\rm sing}, 2)\ar{r} & {\rm CH}^{1}(E, 2) \ar{r} & {\rm CHC}^{1}(X, 1) \ar{r} & \cdots 
\end{tikzcd} \]

By (\ref{eqn:BLO}), ${\rm CH}^{1}(Y,m)=0$ for $m \geq 2$ if $Y$ is smooth.
\end{proof}

In general, ${\rm CHC}^{1}(X,m)$ can be non-zero in negative degrees. The cohomological Chow groups in codimension 1 for a curve are well known, and can be seen in \cite[Proposition 1.1, 1.2]{Hans}. For a normal surface it can also be seen in \cite[Proposition 2.1, 2.2]{Hans}. Here we will make a sketch to illustrate some properties. Recall that the singularities of a normal surface are isolated. 

\begin{ex}
{\em Let $S$ be a projective surface with singular locus $S_{\rm sing}=\{s\}$ an isolated singularity. The 2-resolution of $S$ is given by the diagram: }
\end{ex}
\[\begin{tikzcd}
	p^{-1}(s)=E \ar{r} \ar{d} & \widetilde{S} \ar{d}{p} \\ \{s\} \ar{r} & S
\end{tikzcd}\] where $E= \bigcup^{N}_{i=1} E_{i}$ is a simple normal crossing divisor, with irreducible and projective components. The cohomological cycle complex associated to $S$ is given by
$$\mathcal{Z}^{r}(S,\bullet)^{\ast}={\rm Cone}\big\{\mathcal{Z}^{r}(\widetilde{S}, \bullet) \oplus \mathcal{Z}^{r}(\{s\}, \bullet) \rightarrow \mathcal{Z}^{r}(E_{\bullet})^{\ast}\big\}[-1].$$ 

In this case, the process reduces the study of motivic cohomology of $S$ to the normal crossing divisor $E$. Let $\{E_{i}\}^{N}_{i=1}$ be the irreducible components of $E$, and $E_{ij}=E_{i} \cap E_{j}$ for $i < j$. Let $E^{[0]}:=\coprod E_{i}$ be the resolution of singularities of $E$, and $E^{[1]}:=\coprod_{i < j} E_{ij}$. The face morphisms are given as in Example \ref{nor}. The cubical hyperresolution of $S$ is given by:
\[ \begin{tikzcd}[row sep=0.3em, column sep=1em]
	& E^{[1]} \sqcup E^{[1]} \ar{rr}\ar[]{dd}\ar{dl} && E^{[0]} \ar{dd}\ar{dl}\ar{ddrrr} \\
	\{s\} \ar[crossing over]{rr}\ar{dd} && \{s\} \ar{ddrrr} \\
	& E^{[1]} \ar{rr}\ar{dl} && E \ar[equals]{r}\ar{dl} & E \ar{dl}\ar{rr} && \widetilde{S} \ar{dl} \\
	\{s\} \ar{rr} && \{s\} \ar[equals]{r}\ar[leftarrow, crossing over]{uu} & \{s\} \ar{rr} && S
\end{tikzcd} \quad \stackrel{\rm reduction}{\longrightarrow} \quad \begin{tikzcd}[row sep=0.3em, column sep=1em]
	& E^{[1]} \sqcup E^{[1]} \ar{rr}\ar[]{dd}\ar{dl} && E^{[0]} \ar{dd}\ar{dl} \\
	\{s\} \ar[crossing over]{rr}\ar{dd} && \{s\} \\
	& E^{[1]} \ar{rr}\ar{dl} && \widetilde{S} \ar{dl} && \\
	\{s\} \ar{rr} && S \ar[leftarrow, crossing over]{uu} &&
\end{tikzcd} \] with associated semi-simplicial hyperresolution
\[ \begin{tikzcd}[column sep=2em]
	S_{2}=E^{[1]} \sqcup E^{[1]} \ar[shift left=0.6em]{r} \ar{r} \ar[shift right=0.6em]{r} & S_{1}=E^{[1]} \sqcup E^{[0]} \sqcup \{s\} \ar[shift left=0.3em]{r} \ar[shift right=0.3em]{r} & S_{0}=\widetilde{S} \sqcup \{s\} \sqcup \{s\}\ar{r} & S.
\end{tikzcd} \]  Although a more efficient way to consider this hyperresolution is to omit some terms: 
\[ \begin{tikzcd}[column sep=2em]
	E^{[1]} \ar[shift left=0.3em]{r} \ar[shift right=0.3em]{r} & E^{[0]} \ar{d} \\ 
	& E \ar[shift left=0.3em]{r} \ar[shift right=0.3em]{r} & \widetilde{S} \sqcup \{s\} \ar{r} & S \\ S_{2}=E^{[1]} \ar[shift left=0.6em]{r} \ar{r} \ar[shift right=0.6em]{r} & S_{1}=E^{[0]} \ar[shift left=0.3em]{r} \ar[shift right=0.3em]{r} & S_{0}=\widetilde{S} \sqcup \{s\} \ar{r} & S
\end{tikzcd} \]

\begin{tikzpicture}[overlay, remember picture]
	\draw[thick] (3.3, 1.35) -- (12.2, 1.35); 
\end{tikzpicture} 
it can be proved that the resulting cohomological cycle complexes are quasi-isomorphic. The cohomological cycle complex associated to $E$ is
$$\mathcal{Z}^{r}(E, \bullet)^{\ast}={\rm Cone}^{\bullet}\big\{\mathcal{Z}^{r}(E^{[0]}, \bullet) \stackrel{d^{\ast}}{\longrightarrow} \mathcal{Z}^{r}(E^{[1]}, \bullet)\big\}[-1].$$ 

The {\bf dual graph} $\Gamma(E)$ associated to $E$ is the graph with vertices corresponding to the components $E_{i}$, and the edges corresponding to $E_{ij}$. The resulting graph $\Gamma$ is connected since $E$ is connected. The chain complex associated to $\Gamma(E)$ is a homological complex of two terms: 
$$C_{\bullet}(\Gamma): C_{1}(\Gamma) \stackrel{\partial}{\longrightarrow} C_{0}(\Gamma),$$ where $C_{0}(\Gamma)=\mathbb{Z}[E_{i}]$, $C_{1}(\Gamma)=\mathbb{Z}[E_{ij}]$, and the differential sends $E_{ij}$ to $E_{j} - E_{i}$. The homology is easy to describe, $H_{0}(\Gamma)=\mathbb{Z}$ because $\Gamma$ is connected, and $H_{1}(\Gamma)=\mathbb{Z}^{e-v+1}$ (where $e=\#$edges and $v=\#$vertices, in $\Gamma$). In particular, if $\Gamma$ is a tree, then $H_{1}(\Gamma)=0$. The cohomological chain complex $C^{\bullet}(\Gamma)$ is the complex $\delta \colon C^{0}(\Gamma) \rightarrow C^{1}(\Gamma)$, where $C^{i}(\Gamma)={\rm Hom}(C_{i}(\Gamma), \mathbb{Z})$ and $\delta$ is the dual of $\partial$. Explicitly, $C^{0}(\Gamma)=\mathbb{Z}[e_{i}]$ (resp. $C^{1}(\Gamma)=\mathbb{Z}[E_{ij}]$) is the free $\mathbb{Z}$-module generated by $\{e_{i}\}$ dual to $\{E_{i}\}$ (resp. generated by $\{e_{ij}\}$ dual to $\{E_{ij}\}$), and 
$$\delta(e_{i})=\sum_{i < j}e_{ij} - \sum_{l < i}e_{li}.$$ 

Since ${\rm CH}^{1}(E^{[0]},1)=C^{0}(\Gamma) \otimes \mathbb{C}^{\ast}$ and ${\rm CH}^{1}(E^{[1]},1)=C^{1}(\Gamma) \otimes \mathbb{C}^{\ast}$, then $\delta \otimes 1\colon C^{0}(\Gamma) \otimes \mathbb{C}^{\ast} \rightarrow C^{1}(\Gamma) \otimes \mathbb{C}^{\ast}$ is identified with $d^{\ast}\colon {\rm CH}^{1}(E^{[0]},1) \rightarrow{\rm CH}^{1}(E^{[1]},1)$. Recall that ${\rm CHC}^{1}(E, n)=H^{-n}\big(\mathcal{Z}^{1}(E, \bullet)^{\ast}\big)$, and we have a long exact sequence of the form:
\[\begin{tikzcd}
0 \ar{r} & {\rm CHC}^{1}(E,1) \ar{r} & {\rm CH}^{1}(E^{[0]},1) \ar{r}{d^{\ast}} & {\rm CH}^{1}(E^{[1]},1) \ar{r} & {\rm CHC}^{1}(E) \ar{r} & {\rm CH}^{1}(E^{[0]}) \ar{r} & 0.
\end{tikzcd} \] 

By \cite[Proposition 1.3]{Hans}, ${\rm CHC}^{1}(E, m)=0$ for $m \neq 0, 1$ and ${\rm CHC}^{1}(E, 1)= \mathbb{C}^{\ast}$, and there exist an exact sequence
\[\begin{tikzcd}
0 \ar{r} & H^{1}(\Gamma) \otimes \mathbb{C}^{\ast} \ar{r} & {\rm CHC}^{1}(E) \ar{r} & {\rm CH}^{1}(E^{[0]}) \ar{r} & 0
\end{tikzcd}\] 

By definition, the cohomological Chow group of $S$ in codimension one is: ${\rm CHC}^{1}(S, m)=H^{-m}(\mathcal{Z}^{1}(S,\bullet)^{\ast})$. This induces a long exact sequence
\[\begin{tikzcd}[row sep=0.6em]
	0 \ar{r} & {\rm CHC}^{1}(S,1) \ar{r} & {\rm CH}^{1}(\widetilde{S},1) \oplus \mathbb{C}^{\ast} \ar{r} & {\rm CHC}^{1}(E,1) \ar{r} & {\rm CHC}^{1}(S) \\ \ar{r} & {\rm CH}^{1}(\widetilde{S}) \ar{r} &{\rm CHC}^{1}(E) \ar{r} & {\rm CHC}^{1}(S, -1) \ar{r} & 0
\end{tikzcd} \]

By \cite[Proposition 2.1]{Hans}, if $S$ is an irreducible normal projective surface over $\mathbb{C}$. Then ${\rm CHC}^{1}(S, m)=0$ for $n \neq -1, 0, 1$, ${\rm CHC}^{1}(S,1)=\mathbb{C}^{\ast}$, and there is an exact sequence
	\[\begin{tikzcd}
		0 \ar{r} & {\rm CHC}^{1}(S) \ar{r} & {\rm CH}^{1}(\widetilde{S}) \ar{r} & {\rm CHC}^{1}(E) \ar{r} & {\rm CHC}^{1}(S, -1) \ar{r} & 0
	\end{tikzcd} \]

\subsection{Motivic cohomology of threefolds}
\noindent Let $X$ be a complex singular projective variety of dimension 3 (threefold), with singular locus $X_{\rm sing}$. Let $p\colon \widetilde{X} \rightarrow X$ be a desingularization of $X$ such that $E=p^{-1}(X_{\rm sing})$ is a normal crossing divisor. Let us start by assuming that $X_{\rm sing}$ is a connected smooth curve, and $E$ is a connected smooth surface. Then we have a semi-simplicial hyperresolution of the form:
$$\begin{tikzcd}[column sep=2em]
	X_{1}=E \ar[shift left=0.3em]{r} \ar[shift right=0.3em]{r} & X_{0}=\widetilde{X} \sqcup X_{\rm sing} \ar{r}{a} & X,
\end{tikzcd}$$ with cohomological cycle complex given by
$$\mathcal{Z}^{r}(X_{\bullet})^{\ast}={\rm Cone}\big\{\mathcal{Z}^{r}(\widetilde{X}, \bullet) \oplus \mathcal{Z}^{r}(X_{\rm sing}, \bullet) \rightarrow \mathcal{Z}^{r}(E, \bullet)\big\}[-1].$$ 

In codimension $r=1$, we have a long exact sequence
\[ \begin{tikzcd}[row sep=0.6em]
0 \ar{r} & {\rm CHC}^{1}(X,1) \ar{r} & \Gamma(\widetilde{X}, \mathcal{O}^{\ast}_{\widetilde{X}}) \oplus \Gamma(X_{\rm sing}, \mathcal{O}^{\ast}_{X_{\rm sing}}) \ar{r} & \Gamma(E, \mathcal{O}^{\ast}_{E}) \ar{r} & {\rm CHC}^{1}(X) \\ \ar{r} & {\rm CH}^{1}(\widetilde{X}) \oplus {\rm CH}^{1}(X_{\rm sing}) \ar{r} & {\rm CH}^{1}(E) \ar{r} & {\rm CHC}^{1}(X,-1) \ar{r} & 0.
\end{tikzcd} \]

Under the identification (\ref{eqn:BLO}), the above sequence gives us the following:
\[ \begin{tikzcd}[row sep=0.6em]
	0 \ar{r} & {\rm CHC}^{1}(X,1) \ar{r} & \mathbb{C}^{\ast} \oplus \mathbb{C}^{\ast} \ar{r} & \mathbb{C}^{\ast} \ar{r} & {\rm CHC}^{1}(X) \\
	\ar{r} & {\rm CH}^{1}(\widetilde{X}) \oplus {\rm CH}^{1}(X_{\rm sing}) \ar{r} & {\rm CH}^{1}(E) \ar{r} & {\rm CHC}^{1}(X, -1) \ar{r} & 0.
\end{tikzcd} \]

\begin{pro}
	With the above conditions, we have that ${\rm CHC}^{1}(X,m)=0$ for $m \neq -1, 0, 1$, and ${\rm CHC}^{1}(X,1)=\mathbb{C}^{\ast}$. Moreover 
	$${\rm CHC}^{1}(X)={\rm Ker}\{{\rm CH}^{1}(\widetilde{X}) \oplus {\rm CH}^{1}(X_{\rm sing}) \ \rightarrow \ {\rm CH}^{1}(E)\},$$ and 
	$${\rm CHC}^{1}(X,-1)={\rm Coker}\{{\rm CH}^{1}(\widetilde{X}) \oplus {\rm CH}^{1}(X_{\rm sing}) \ \rightarrow \ {\rm CH}^{1}(E)\}.$$
\end{pro}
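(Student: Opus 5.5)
The plan is to read everything off the displayed long exact sequence coming from the cone (\ref{cycom}), using the two-step hyperresolution $E \rightrightarrows \widetilde{X}\sqcup X_{\rm sing} \to X$. Since $\widetilde{X}$, $X_{\rm sing}$ and $E$ are all smooth, each term other than ${\rm CHC}^{1}(X,\cdot)$ is a Bloch higher Chow group of a smooth variety. By (\ref{eqn:BLO}) and the vanishing of ${\rm CH}^{1}(Y,m)$ for $m<0$ when $Y$ is smooth, these groups vanish outside $m\in\{0,1\}$.

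\textbf{Vanishing outside $m\in\{-1,0,1\}$.} In the sequence
\[ \cdots \to {\rm CH}^{1}(E,m+1)\to {\rm CHC}^{1}(X,m)\to {\rm CH}^{1}(\widetilde{X},m)\oplus{\rm CH}^{1}(X_{\rm sing},m)\to \cdots, \]
the group ${\rm CHC}^{1}(X,m)$ sits between terms indexed by $m+1$ and $m$. Both outer terms vanish unless $m\in\{-1,0,1\}$. For $m\ge 2$ this is also Proposition \ref{codim}.

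\textbf{The case $m=1$.} The beginning of the sequence reads
\[ 0\to {\rm CHC}^{1}(X,1)\to \mathbb{C}^{\ast}\oplus\mathbb{C}^{\ast}\to \mathbb{C}^{\ast}. \]
This uses that $\widetilde{X}$, $X_{\rm sing}$ and $E$ are connected complete varieties over $\mathbb{C}$, so their global units are constants. For $\widetilde{X}$ this holds because $X$ is irreducible and projective and $p$ is birational. The map is the difference of the two pullbacks, $(a,b)\mapsto a-b$ written additively, which is restriction of constants along $E\to\widetilde{X}$ and $E\to X_{\rm sing}$. It is surjective, and its kernel is the diagonal $\mathbb{C}^{\ast}$. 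Hence ${\rm CHC}^{1}(X,1)=\mathbb{C}^{\ast}$.

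\textbf{The cases $m=0$ and $m=-1$.} Surjectivity onto $\Gamma(E,\mathcal{O}^{\ast}_{E})=\mathbb{C}^{\ast}$ forces the connecting map $\mathbb{C}^{\ast}\to{\rm CHC}^{1}(X)$ to be zero. Exactness then gives ${\rm CHC}^{1}(X)$ as the kernel of ${\rm CH}^{1}(\widetilde{X})\oplus{\rm CH}^{1}(X_{\rm sing})\to{\rm CH}^{1}(E)$. The sequence ends with $\to{\rm CH}^{1}(E)\to{\rm CHC}^{1}(X,-1)\to 0$, because the next term ${\rm CH}^{1}(\widetilde{X},-1)\oplus{\rm CH}^{1}(X_{\rm sing},-1)$ vanishes. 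This identifies ${\rm CHC}^{1}(X,-1)$ with the cokernel.

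\textbf{Main obstacle.} The only real care needed is in identifying the unit groups with $\mathbb{C}^{\ast}$, which requires connectedness and properness of all three spaces, and in checking that the connecting map is the alternating difference of pullbacks, so that it is surjective. Once surjectivity is established, everything else is formal.
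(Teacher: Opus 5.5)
Your proposal is correct and takes essentially the same approach as the paper, which proves the statement by saying it is immediate from the long exact sequence with the segment $\mathbb{C}^{\ast}\oplus\mathbb{C}^{\ast}\to\mathbb{C}^{\ast}$. You spell out the steps the paper leaves implicit: the higher Chow groups of the smooth pieces vanish outside degrees $0,1$; the difference-of-pullbacks map on constant units is surjective with diagonal kernel, which kills the connecting map into ${\rm CHC}^{1}(X)$ and gives the kernel and cokernel descriptions.
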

\begin{proof}
It is immediate from the above exact sequence. 
\end{proof} 

\noindent Now suppose that $X_{\rm sing}=\{x\}$ is an isolated singularity, and consider a 2-resolution of $X$:
\[ \begin{tikzcd}
	E \ar{r} \ar{d} & \widetilde{X} \ar{d}{p} \\ \{x\} \ar{r} & X.
\end{tikzcd} \] 

This desingularization extends to a cubical hyperresolution by resolving $E \rightarrow X_{\rm sing}$, but in this case $X_{\rm sing}$ is non-singular, and the process continues via a 2-resolution of $E$. By definition, ${\rm CHC}^{r}(X,m)$ is the $m^{\rm th}$-homology of the complex
$$\mathcal{Z}^{r}(X_{\bullet})^{\ast}:={\rm Cone}\{\mathcal{Z}^{r}(\widetilde{X}, \bullet) \oplus \mathcal{Z}^{r}(\{x\}, \bullet) \rightarrow \mathcal{Z}^{r}(E_{\bullet})^{\ast}\}[-1].$$ Since $\widetilde{X}$ and $X_{\rm sing}$ are both smooth, then ${\rm CHC}^{1}(\widetilde{X}, m)={\rm CH}^{1}(\widetilde{X}, m)$  and ${\rm CHC}^{1}(X_{\rm sing}, m)={\rm CH}^{1}(X_{\rm sing}, m)$, and vanish in negative degrees. Recall by Proposition \ref{codim}, that ${\rm CHC}^{1}(X, m)=0$  for $m > 1$. Furthermore, we have the following result, for any type of singularities.

\begin{pro}\label{dim}
	Let $X$ be a projective variety over $\mathbb{C}$, of dimension $d$. Then 
	$${\rm CHC}^{r}(X ,m)=0$$ whenever $r > d + m$.
\end{pro}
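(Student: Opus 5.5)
We argue with the spectral sequence of a semi-simplicial hyperresolution, combined with the vanishing of higher Chow groups of smooth varieties in negative degrees for dimension reasons. By the remark after Proposition \ref{nor} (using \cite[I.2.15; I.6.9]{GNPP}), $X$ admits a semi-simplicial hyperresolution $a\colon X_{\bullet}\rightarrow X$ with each $X_{p}$ smooth and ${\rm dim}\, X_{p}\leq d-p$. Let $d_{p}={\rm dim}\, X_{p}$. We then use the fourth quadrant spectral sequence
$$E^{p,q}_{1}(r)={\rm CH}^{r}(X_{p},-q)\ \Rightarrow\ {\rm CHC}^{r}(X,-p-q).$$

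The key input concerns a smooth variety $Y$ of dimension $e$. Its Bloch cycle complex satisfies $\mathcal{Z}^{r}(Y,n)=0$ whenever $r>e+n$. Indeed, a cycle of codimension $r$ in $Y\times\Delta^{n}$, which has dimension $e+n$, must be empty once $r>e+n$. Hence ${\rm CH}^{r}(Y,n)=0$ for $r>e+n$. This also covers $n<0$, where the complex is zero by definition. Applied to $X_{p}$, we get $E^{p,q}_{1}(r)=0$ whenever $r>d_{p}-q$. Since $d_{p}\leq d-p$, it suffices that $r>d-p-q$.

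Now take $m$ with $r>d+m$. The terms contributing to ${\rm CHC}^{r}(X,m)$ lie on the line $-p-q=m$, so $q=-m-p$. Substituting, the vanishing condition $r>d-p-q$ becomes $r>d-p+m+p=d+m$, which holds by hypothesis. So every $E_{1}^{p,q}$ on the relevant total degree vanishes. The spectral sequence is bounded: $p$ ranges over the finite length of the hyperresolution, and $q\leq 0$. It therefore converges, and the abutment ${\rm CHC}^{r}(X,m)$ is zero.

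Alternatively, the same conclusion follows at the level of complexes, with no convergence question. The total complex in degree $-m$ is $\bigoplus_{p}\mathcal{Z}^{r}(X_{p},m+p)$, and each summand is zero because $r>(d-p)+(m+p)$. Its homology ${\rm CHC}^{r}(X,m)$ therefore vanishes.

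The only real subtlety is the existence of a hyperresolution with the dimension bound ${\rm dim}\,X_{p}\leq d-p$ for arbitrary singularities, together with the independence of ${\rm CHC}$ from the choice of hyperresolution \cite[Theorem I]{Han}. We take both as given from \cite{GNPP} and \cite{Han}. A secondary point is that Hanamura works with distinguished subcomplexes of $\mathcal{Z}^{r}(X_{p},\bullet)$. These are subcomplexes, so the degreewise vanishing passes to them.
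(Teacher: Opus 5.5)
Your proof is correct, but it is organized differently from the paper's. The paper argues by induction, nominally on the length of the hyperresolution and effectively on dimension. It takes one 2-resolution square ($E\to\widetilde X$ over $X_{\rm sing}\to X$) and uses the resulting long exact sequence ${\rm CHC}^{r}(E,m+1)\to{\rm CHC}^{r}(X,m)\to{\rm CH}^{r}(\widetilde X,m)\oplus{\rm CHC}^{r}(X_{\rm sing},m)$. The outer terms vanish by the smooth case for $\widetilde X$ and by the inductive hypothesis for $E$ and $X_{\rm sing}$. The key point is that the drop $\dim E\le d-1$ exactly compensates the shift $m\mapsto m+1$ in the connecting map.

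You instead put all the recursion into the existence theorem of \cite{GNPP}, which gives a hyperresolution with $\dim X_{p}\le d-p$. After that no induction or exact sequence is needed. The total complex is literally zero in degree $-m$, since every summand $\mathcal{Z}^{r}(X_{p},m+p)$ vanishes for dimension reasons. Both arguments rest on the same basic fact, $\mathcal{Z}^{r}(Y,n)=0$ for $r>\dim Y+n$.

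Your route gives a chain-level vanishing rather than just vanishing of homology, and it needs no convergence argument. It also avoids the paper's loosely stated inductive step. The paper's route does not need the sharp bound $\dim X_{p}\le d-p$. It only uses that $E$ and $X_{\rm sing}$ have strictly smaller dimension than $X$, plus the descent sequence for a single abstract blow-up square; in effect the induction re-derives the dimension bound step by step.
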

\begin{proof}
Let $X_{\bullet} \rightarrow X$ be a cubical hyperresolution of lenght $r$, we do recursion on $r$ and this leads to the case $r=1$. In this case, let $X_{\rm sing}$ be the singular locus of $X$, and consider the following diagram given by resolution of singularities:
\[ \begin{tikzcd}
E= p^{-1}(X_{\rm sing}) \ar{r} \ar{d} & \widetilde{X} \ar{d}{p} \\ X_{\rm sing} \ar{r} & X.
\end{tikzcd} \] 

By assumption, the cubical hyperresolution is of length one, hence $X_{\rm sing}$ and $E$ are smooth. The associated semi-simplicial scheme is:
	\begin{equation} \label{eqn:hyperresolution} X_\bullet = \Bigl\{\begin{tikzcd}[column sep=2em]
			X_1 = E \ar[shift left=0.3em]{r} \ar[shift right=0.3em]{r} & X_0 = \widetilde{X} \sqcup X_{\mathrm{sing}} 
		\end{tikzcd}\Bigr\} \xrightarrow{a} X
	\end{equation}

This induces a long exact sequence in cohomological Chow groups
\[ \begin{tikzcd}
\cdots \ar{r} & {\rm CH}^{r}(X_{1}, m+1) \ar{r} & {\rm CHC}^{r}(X, m) \ar{r} & {\rm CH}^{r}(X_{0}, m) \ar{r} & {\rm CH}^{r}(X_{1}, m) \ar{r} & \cdots
\end{tikzcd} \] 

Since ${\rm dim}(X_1 = E) < d$, we see that ${\rm CH}^r(X_1,m+1) = 0$ by dimension considerations. Similarly, $X_{0}=\widetilde{X} \sqcup X_{\mathrm{sing}}$ does not contribute to ${\rm CHC}^r(X,m)$ in the range $r>m+d$, hence the claim. If $E$ and $X_{\rm sing}$ are not smooth, we take 2-solutions for both and conclude that ${\rm CHC}^{r}(E, m)=0$ and ${\rm CHC}^{r}(X_{\rm sing}, m)=0$. By induction we have the result.
\end{proof}    

A first hyperresolution associated to $X$ is:  \begin{tikzcd}[column sep=2em]
E \ar[shift left=0.3em]{r} \ar[shift right=0.3em]{r} & X_{0}=\widetilde{X} \sqcup X_{\rm sing} \ar{r} & X.
\end{tikzcd} Then, putting all these results together we see that ${\rm CHC}^{1}(E,m)=0$ if $m < -1$ and ${\rm CHC}^{1}X, m)=0$ for $m < -2$, so that we have a long exact sequence
\begin{equation}\label{eq1}
\begin{tikzcd}[row sep=0.6em]
0 \ar{r} & {\rm CHC}^{1}(X, 1) \ar{r} & {\rm CH}^{1}(X_{0}, 1) \ar{r} & {\rm CHC}^{1}(E, 1) \ar{r} & {\rm CHC}^{1}(X) \ar{r} & {\rm CH}^{1}(X_{0}) \\ \ar{r} & {\rm CHC}^{1}(E) \ar{r} & {\rm CHC}^{1}(X, -1) \ar{r} & 0 \ar{r} & {\rm CHC}^{1}(E, -1) \ar{r} & {\rm CHC}^{1}(X, -2) 
\end{tikzcd} 
\end{equation}

The first observation is that ${\rm CHC}^{1}(E, -1) \cong {\rm CHC}^{1}(X, -2)$. To compute the cohomological Chow groups of $X$ in codimension $1$ and degree $-2$, consider the following. Let $E=\bigcup^{N}_{i=1} E_{i}$ be a simple normal crossing divisor with each component an irreducible and projective surface. Let $E_{ij}=E_{i} \cap E_{j}$ be the intersection of any two irreducible components of $E$, so the intersections $E_{ij}$ are curves, and $E_{ijl}=E_{i} \cap E_{j} \cap E_{l}$ which are points. Set $E^{[1]}:=\coprod_{i < j} E_{ij}$, and $E^{[2]}:=\coprod_{i < j < l} E_{ijl}$. The resolution of singularities of $E$ is:
\[ \begin{tikzcd}
E^{(1)} \coprod E^{(1)} \ar{r} \ar{d} & E^{[0]} \ar{d} \\ E^{(1)} \ar{r} & E
\end{tikzcd} \] where $E^{(0)}:=E_{1} \cup \cdots \cup E_{N}=E$, and $E^{[0]}:=\coprod E_{i}$ is the canonical desingularization of $E^{(0)}$. Here $E^{(1)}$ is the union of 2-fold intersections of the components of $E$ which is the singular locus of $E$, the inverse image is identified with $E^{(1)} \coprod E^{(1)}$. In general $E^{(t)}:=\bigcup(E_{i_{0}} \cap \cdots \cap E_{i_{t}})$, and then $E^{[t]}$ is the canonical resolution of singularities of $E^{(t)}$. In our case, the cubical hyperresolution associated to $E$ is:
\[ \begin{tikzcd}[row sep=0.3em, column sep=1em]
	& E^{[2]} \coprod E^{[2]} \ar{rr}\ar[]{dd}\ar{dl} && E^{[1]} \coprod E^{[1]} \ar{dd}\ar{dl} \\  E^{[2]} \ar[crossing over]{rr}\ar{dd} && E^{[1]} \\
	& E^{[2]} \ar{rr}\ar{dl} && E^{[0]} \ar{dl} && \\
	E^{[2]} \ar{rr} && E \ar[leftarrow, crossing over]{uu} &&
\end{tikzcd} \] with semi-simplicial scheme augmented over $E$:
\[ \begin{tikzcd}[column sep=2em]
E^{[2]} \coprod E^{[2]} \ar[shift left=0.6em]{r} \ar{r} \ar[shift right=0.6em]{r} & \big(E^{[2]} \coprod E^{[2]}\big) \coprod \big(E^{[1]} \coprod E^{[1]}\big) \ar[shift left=0.3em]{r} \ar[shift right=0.3em]{r} & E^{[2]} \coprod E^{[1]} \coprod E^{[0]} \ar{r} & E
\end{tikzcd} \]

This semi-simplicial hyperresolution is redundant, a more efficient simplicial scheme is:
\[ \begin{tikzcd}[column sep=2em]
	E^{[2]}=\coprod (E_{i} \cap E_{j} \cap E_{l}) \ar[shift left=0.6em]{r} \ar{r} \ar[shift right=0.6em]{r} & E^{[1]}=\coprod (E_{i} \cap E_{j}) \ar[shift left=0.3em]{r} \ar[shift right=0.3em]{r} & E^{[0]}=\coprod E_{i} \ar{r} & E.
\end{tikzcd} \] 
 
Then the cohomological cycle complex associated to $E$ is:
$$\mathcal{Z}^{r}(E, \bullet)^{\ast}:={\rm Tot}^{\bullet}\big\{   
\mathcal{Z}^{r}(E^{[0]}, \bullet) \longrightarrow \mathcal{Z}^{r}(E^{[1]}, \bullet) \longrightarrow \mathcal{Z}^{r}(E^{[2]}, \bullet)\big\}.$$ 

By definition, the motivic cohomology of $E$ is ${\rm CHC}^{r}(E,m)=H^{-m}\big(\mathcal{Z}^{r}(E, \bullet)^{\ast}\big)$. To compute these groups, we will use the following general technique \cite{ABW, Ste, Step}: consider a resolution of singularities of $X$ 
\[ \begin{tikzcd} 
E=p^{-1}(X_{\rm sing}) \ar{r} \ar{d} & \widetilde{X} \ar{d}{p} \\ X_{\rm sing} \ar{r} & X
\end{tikzcd} \] where $E=\bigcup^{N}_{i=1}E_{i}$ is a simple normal crossing divisor, let us also assume that each intersection $E_{i_{0}} \cap \cdots \cap E_{i_{t}}$ is irreducible \cite{Ste}. Now we will associate to $E$ a simplicial complex, which describes the way in which the different components of $E$ intersect.

\begin{defn}\cite[Section 1]{Ste}
{\em Let $E=\bigcup^{N}_{i=1}E_{i}$ be a simple normal crossing divisor. The {\bf dual complex} $\Gamma(E)$ associated to $E$ is the simplicial complex constructed as follows: for $l \in \mathbb{Z}_{\geq 0}$, an $l$-dimensional cell corresponds to each irreducible component of $(l+1)$-fold intersections $E_{i_{0}} \cap \cdots \cap E_{i_{l}}$. }
\end{defn}

The 0-simplices correspond to the irreducible components $E_{i}$ of $E$. The 1-simplices (edges) correspond to the intersections $E_{ij}=E_{i} \cap E_{j}$. If $X$ is a surface, $\Gamma(E)$ is the dual graph associated to the resolution $p\colon \widetilde{X} \rightarrow X$. In this sense, the concept of dual complex generalizes the notion of dual graph. In general $\Gamma(E)$ is a CW-complex, and is a simplicial complex if only if all the intersections $E_{i_{0}} \cap \cdots \cap E_{i_{t}}$ are irreducible \cite[Section 1]{Ste}. For another resolution $p' \colon \widetilde{X}' \rightarrow X$, we have different dual complexes $\Gamma(E)$  and $\Gamma(E')$. But by \cite[Theorem 1]{Ste}, the resulting complexes $\Gamma(E)$  and $\Gamma(E')$ have the same homotopy type. We say that $\Gamma(E)$ has {\it irreducible intersections} if $E$ is a simple normal crossing divisor, such that each $E_{i_{0}} \cap \cdots \cap E_{i_{t}}$ is irreducible. Then, we have following technical lemma:

\begin{lem}\label{lem}
Suppose that $E$ is a simple normal crossing divisor with irreducible intersections, then the complex ${\rm CH}^{1}(E^{[\bullet]}, 1)$ coincides with the complex of chains $C^{\bullet}(\Gamma) \otimes \mathbb{C}^{\ast}$ on the dual complex $\Gamma(E)$.
\end{lem}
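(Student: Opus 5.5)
The plan is to compare the two complexes term by term and then check that the differentials agree.

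\emph{Terms.} Each $E_{I}=E_{i_{0}}\cap\cdots\cap E_{i_{t}}$, $|I|=t+1$, is a smooth, irreducible, projective variety over $\mathbb{C}$. It is irreducible by the hypothesis of irreducible intersections, and when nonempty it is connected. By (\ref{eqn:BLO}) we have ${\rm CH}^{1}(E_{I},1)=\Gamma(E_{I},\mathcal{O}^{\ast}_{E_{I}})$. Since $E_{I}$ is proper and connected, $\Gamma(E_{I},\mathcal{O}_{E_{I}})=\mathbb{C}$, so this group is $\mathbb{C}^{\ast}$; this also covers the case where $E_{I}$ is a point. Because higher Chow groups are additive on disjoint unions, we get
\[
{\rm CH}^{1}(E^{[t]},1)=\bigoplus_{|I|=t+1,\ E_{I}\neq\emptyset}\mathbb{C}^{\ast}.
\]
On the other side, the irreducibility hypothesis makes $\Gamma(E)$ a genuine simplicial complex whose $t$-simplices are in bijection with the nonempty $E_{I}$, $|I|=t+1$, by \cite[Section 1]{Ste}. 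Hence $C^{t}(\Gamma)\otimes\mathbb{C}^{\ast}=\bigoplus_{I}\mathbb{Z}e_{I}\otimes\mathbb{C}^{\ast}$. We identify the two sides by sending the constant $\lambda$ on $E_{I}$ to $e_{I}\otimes\lambda$.

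\emph{Differentials.} The differential of ${\rm CH}^{1}(E^{[\bullet]},1)$ is $d^{\ast}=\sum_{j}(-1)^{j}d_{t+1,j}^{\ast}$. Here $d_{t+1,j}$ is induced by the closed immersion $E_{I}\hookrightarrow E_{I\setminus\{i_{j}\}}$, with indices ordered $i_{0}<\cdots<i_{t+1}$. Under the identification ${\rm CH}^{1}(-,1)=\mathcal{O}^{\ast}$, pull-back along a morphism of smooth varieties is restriction of units. This is the step I expect to be the main obstacle: one has to know that Bloch's isomorphism is contravariantly functorial, or check directly that pulling back the graph cycle of a unit $f$ in $\mathcal{Z}^{1}(Y,1)$ gives the graph cycle of $f|_{Z}$. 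The latter works after moving to the distinguished subcomplex of \cite{Han}, because the graph of a unit meets $Z\times\square^{1}$ properly.

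Granting this, $d_{t+1,j}^{\ast}$ sends the constant $\lambda$ on $E_{I\setminus\{i_{j}\}}$ to the constant $\lambda$ on $E_{I}$. Written multiplicatively, $d^{\ast}$ therefore sends $(\lambda_{J})_{|J|=t+1}$ to
\[
\Bigl(\prod_{j}\lambda_{I\setminus\{i_{j}\}}^{(-1)^{j}}\Bigr)_{|I|=t+2}.
\]
This is exactly $\delta\otimes 1$, where $\delta(e_{J})=\sum_{I\supset J}(-1)^{{\rm pos}(I\setminus J)}e_{I}$ is the simplicial coboundary. For $t=0$ it reproduces the formula $\delta(e_{i})=\sum_{i<j}e_{ij}-\sum_{l<i}e_{li}$ given for the dual graph.

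\emph{Sign conventions.} The last step is to check that the ordering and sign conventions of the face maps $d_{t,j}$ in Proposition~\ref{nor} match the orientation of $\Gamma(E)$ given by increasing indices. If they differed by a global sign, the resulting complexes would still be isomorphic. With that check done, the two complexes coincide.
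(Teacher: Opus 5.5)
Your proposal is correct and follows essentially the same route as the paper. Like the paper, you identify each term via ${\rm CH}^{1}(Y,1)=\Gamma(Y,\mathcal{O}^{\ast}_{Y})\cong(\mathbb{C}^{\ast})^{\pi_{0}(Y)}$ on the smooth projective strata $E_{I}$, and then match the alternating sum of pull-backs (restriction of constants) with $\delta\otimes 1$; you are more careful than the paper about functoriality and signs, though with the face maps of Proposition~\ref{nor} your $t=0$ coboundary is $\delta(e_{i})=-\sum_{i<j}e_{ij}+\sum_{l<i}e_{li}$ (dual to $\partial E_{ij}=E_{j}-E_{i}$), the negative of the paper's displayed formula, which is harmless for exactly the reason you give.
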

\begin{proof}
Denote by $E^{\bullet, -1}_{1}(1):={\rm CH}^{1}(E^{[\bullet]}, 1)$ the complex given by the higher Chow groups of codimension $1$ and degree $1$. Recall that if $Y$ is a smooth and projective variety over $\mathbb{C}$, then ${\rm CH}^{1}(Y, 1)=\Gamma(Y, \mathcal{O}^{\ast}_{Y}) \cong (\mathbb{C}^{\ast})^{\pi_{0}(Y)}$. Let $E$ be a normal crossing divisor with associated dual complex $\Gamma(E)$ with $E_{i_{0}} \cap \cdots \cap E_{i_{t}} \neq \emptyset$, and $E^{[t]}= \coprod_{i_{0} < \cdots < i_{t}} E_{i_{0}} \cap \cdots \cap E_{i_{t}}$. Then, the sheaf $\mathcal{O}^{\ast}_{E^{[t]}}$ admits a decomposition of the form $$\mathcal{O}^{\ast}_{E^{[t]}}=\bigoplus_{i_{0} < \cdots < i_{t}} \mathcal{O}^{\ast}_{E_{i_{0}} \cap \cdots \cap E_{i_{t}}}.$$ 

Thus
$$E^{t,-1}_{1}(1)={\rm CH}^{1}(E^{[t]}, 1)=\Gamma(E^{[t]}, \mathcal{O}^{\ast}_{E^{[t]}})=\bigoplus_{\{i_{0}, \ldots, i_{t}\} \in \Gamma} \mathbb{C}^{\ast} \cong C^{t}(\Gamma) \otimes_{\mathbb{Z}} \mathbb{C}^{\ast},$$ where $C^{t}(\Gamma)=\mathbb{Z}^{\#\{i_{1}, \ldots, i_{t} : E_{i_{1}, \ldots, i_{t}}\}}$. The complex $E^{\bullet, -1}_{1}(1)$ associated to $E$, is given by
\[ \begin{tikzcd}
0 \ar{r} & {\rm CH}^{1}(E^{[0]}, 1) \ar{r}{d^{\ast}} & {\rm CH}^{1}(E^{[1]}, 1) \ar{r}{d^{\ast}} & \cdots \ar{r}{d^{\ast}} & {\rm CH}^{1}(E^{[d-2]}, 1) \ar{r}{d^{\ast}} & {\rm CH}^{1}(E^{[d-1]}, 1) \ar{r} & 0
\end{tikzcd} \] where the differentials $d^{\ast}$ are the alternating sums of pull-backs of the face maps. Therefore, it is equivalent to the chain complex
\[ \begin{tikzcd}
0 \ar{r} & C^{0}(\Gamma) \otimes_{\mathbb{Z}} \mathbb{C}^{\ast} \ar{r}{\delta \otimes 1} & C^{1}(\Gamma) \otimes_{\mathbb{Z}} \mathbb{C}^{\ast} \ar{r}{\delta \otimes 1} & \cdots \ar{r}{\delta \otimes 1} & C^{d-2}(\Gamma) \otimes_{\mathbb{Z}} \mathbb{C}^{\ast} \ar{r}{\delta \otimes 1} & C^{d-1}(\Gamma) \otimes_{\mathbb{Z}} \mathbb{C}^{\ast} \ar{r} & 0
\end{tikzcd} \] associated to the dual complex $\Gamma(E)$.
\end{proof}

Returning to the case of three-dimensional varieties, we have the following result.

\begin{pro}\label{norma}
Let $E$ be a connected simple normal crossing divisor with irreducible intersections on a variety $X$ of dimension 3, with associated dual complex $\Gamma(E)$ such that $H^{2}(\Gamma)=0$. Then ${\rm CHC}^{1}(E,m)=0$ for $m\neq -1,0,1$, ${\rm CHC}^{1}(E,1)=\mathbb{C}^{\ast}$, and there is an exact sequence:
\[ \begin{tikzcd}
0 \ar{r} & H^{1}(\Gamma) \otimes \mathbb{C}^{\ast} \ar{r} & {\rm CHC}^{1}(E) \ar{r} & E^{0,0}_{2}(1) \ar{r} & 0.
\end{tikzcd} \]
\end{pro}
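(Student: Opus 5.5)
We compute ${\rm CHC}^{1}(E,m)$ with the spectral sequence of the semi-simplicial hyperresolution $E^{[\bullet]} \rightarrow E$:
$$E^{a,b}_{1}(1)={\rm CH}^{1}(E^{[a]},-b) \Rightarrow {\rm CHC}^{1}(E,-a-b).$$
Since $E$ is a divisor in a threefold, $E^{[0]}$ consists of surfaces, $E^{[1]}$ of curves, $E^{[2]}$ of points, and $E^{[a]}=\emptyset$ for $a\geq 3$.

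The first step is to see which rows are nonzero. By (\ref{eqn:BLO}), ${\rm CH}^{1}(Y,m)=0$ for $m\geq 2$ when $Y$ is smooth, and higher Chow groups of smooth varieties vanish in negative degrees. So only the rows $b=-1$ and $b=0$ survive. In the row $b=0$ we have ${\rm CH}^{1}(E^{[2]})=0$, because points carry no codimension one cycles. Hence the $E_{1}$-page consists of:
\begin{itemize}
\item the row $b=-1$: ${\rm CH}^{1}(E^{[0]},1)\to {\rm CH}^{1}(E^{[1]},1)\to {\rm CH}^{1}(E^{[2]},1)$;
\item the row $b=0$: ${\rm CH}^{1}(E^{[0]})\to {\rm CH}^{1}(E^{[1]})$.
\end{itemize}
By Lemma \ref{lem}, the row $b=-1$ is $C^{\bullet}(\Gamma)\otimes\mathbb{C}^{\ast}$. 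Since $\mathbb{C}^{\ast}$ is divisible, hence injective as a $\mathbb{Z}$-module, and the cochains are free, the universal coefficient theorem gives $E^{a,-1}_{2}(1)=H^{a}(\Gamma)\otimes\mathbb{C}^{\ast}$. Thus:
\begin{itemize}
\item $E^{0,-1}_{2}=\mathbb{C}^{\ast}$, because $E$ is connected, so $\Gamma$ is connected;
\item $E^{1,-1}_{2}=H^{1}(\Gamma)\otimes\mathbb{C}^{\ast}$;
\item $E^{2,-1}_{2}=H^{2}(\Gamma)\otimes\mathbb{C}^{\ast}=0$ by hypothesis.
\end{itemize}
Since $\dim\Gamma\leq 2$, there is nothing else in this row.

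The second step is degeneration. Only two adjacent rows are nonzero, so the only possible nonzero differentials are $d_{2}\colon E^{a,-1}_{2}\to E^{a+2,-2}_{2}$, which land in the zero row $b=-2$, and maps from row $b=0$ to row $b=1$, which is also zero. Indeed $d_{r}$ has bidegree $(r,1-r)$, and every target is zero, so the sequence degenerates at $E_{2}$. Reading off total degrees $n=a+b$, with ${\rm CHC}^{1}(E,-n)$:
\begin{itemize}
\item $n=-1$: only $E^{0,-1}_{2}=\mathbb{C}^{\ast}$, so ${\rm CHC}^{1}(E,1)=\mathbb{C}^{\ast}$.
\item $n=0$: the pieces are $E^{1,-1}_{2}=H^{1}(\Gamma)\otimes\mathbb{C}^{\ast}$ and $E^{0,0}_{2}$. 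The filtration gives the short exact sequence $0\to H^{1}(\Gamma)\otimes\mathbb{C}^{\ast}\to{\rm CHC}^{1}(E)\to E^{0,0}_{2}(1)\to 0$; the subobject is the piece of higher filtration degree $a$.
\item $n=1$: the pieces are $E^{2,-1}_{2}=0$ and $E^{1,0}_{2}={\rm coker}({\rm CH}^{1}(E^{[0]})\to{\rm CH}^{1}(E^{[1]}))$. So ${\rm CHC}^{1}(E,-1)=E^{1,0}_{2}$, which may be nonzero.
\item All other total degrees have zero contributions, so ${\rm CHC}^{1}(E,m)=0$ for $m\neq -1,0,1$. The case $m\geq 2$ also follows directly from Proposition \ref{codim}.
\end{itemize}

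The main point needing care is the identification of the $b=-1$ row with simplicial cochains. This requires the sign conventions of $d^{\ast}$ to match $\delta$, which Lemma \ref{lem} provides, together with the exactness of $-\otimes\mathbb{C}^{\ast}$ on cohomology. A second point is the indexing of the filtration, which decides which $E_{2}$ term is the subgroup in the extension. Alternatively, one can write the total complex as the cone over the two rows and run the resulting long exact sequence, as in the surface case \cite[Proposition 1.3]{Hans}; this recovers the same extension directly.
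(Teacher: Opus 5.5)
Your setup is the paper's: the spectral sequence of $E^{[\bullet]}\to E$, only the rows $b=0,-1$, and Lemma~\ref{lem} for the bottom row. The gap is in your degeneration step, and that is exactly where the hypothesis is needed.

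With the bidegree $(r,1-r)$ that you state, $d_{2}$ sends $E_{2}^{a,0}$ to $E_{2}^{a+2,-1}$. It goes from the row $b=0$ \emph{down} into the row $b=-1$, not to a row $b=1$; a map $(a,0)\mapsto(a+2,1)$ would raise the total degree by $3$. So two adjacent nonzero rows do not force degeneration. One differential is not killed by the shape of the page:
\[ d_{2}^{0,0}\colon {\rm Ker}\big({\rm CH}^{1}(E^{[0]})\to{\rm CH}^{1}(E^{[1]})\big)\longrightarrow {\rm Coker}\big({\rm CH}^{1}(E^{[1]},1)\to{\rm CH}^{1}(E^{[2]},1)\big). \]
Concretely, given line bundles $L_{i}$ on the $E_{i}$ with $L_{i}|_{E_{ij}}\cong L_{j}|_{E_{ij}}$, it measures whether the gluing isomorphisms can be chosen to satisfy the cocycle condition at the triple points, i.e.\ whether the $L_{i}$ come from a line bundle on $E$. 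It need not vanish in general.

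Since $E^{0,0}_{\infty}(1)={\rm Ker}\,d_{2}^{0,0}$, surjectivity of ${\rm CHC}^{1}(E)\to E^{0,0}_{2}(1)$ in the statement is equivalent to $d_{2}^{0,0}=0$. Your argument does not use $H^{2}(\Gamma)=0$ at this point; you use it only for the side computation of ${\rm CHC}^{1}(E,-1)$. As written, it would therefore give the sequence for arbitrary $\Gamma$.

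The paper's proof is organized around precisely this differential: $H^{2}(\Gamma)=0$ makes $d_{1}^{1,-1}$ surjective, so $E^{2,-1}_{2}(1)=0$ and hence $d_{2}^{0,0}=0$. You already computed $E^{2,-1}_{2}=0$, so the repair is one line. The $d_{r}$ with $r\geq 3$ vanish because they move down $r-1\geq 2$ rows.

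A smaller point concerns your universal coefficient step. Injectivity of $\mathbb{C}^{\ast}$ gives $H^{a}(C^{\bullet}(\Gamma)\otimes\mathbb{C}^{\ast})\cong{\rm Hom}(H_{a}(\Gamma),\mathbb{C}^{\ast})$. This agrees with $H^{a}(\Gamma)\otimes\mathbb{C}^{\ast}$ only when $H_{a}(\Gamma)$ is torsion-free; in general there is an extra summand ${\rm Tor}(H^{a+1}(\Gamma),\mathbb{C}^{\ast})$, and $\mathbb{C}^{\ast}$ has torsion. Here it is harmless. $H_{0}$ and $H_{2}$ of a complex of dimension $\leq 2$ are free, and the torsion of $H_{1}(\Gamma)$ is the torsion of $H^{2}(\Gamma;\mathbb{Z})=0$. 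So your identification of $E^{1,-1}_{2}$ with $H^{1}(\Gamma)\otimes\mathbb{C}^{\ast}$ also uses the hypothesis.
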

\begin{proof} 
Consider the semi-simplicial scheme $e\colon E^{[\bullet]} \rightarrow E$ augmented over $E$, then we have the fourth quadrant spectral sequence: 
$$E^{t,m}_{1}(1):={\rm CH}^{1}(E^{[t]}, -m) \ \Longrightarrow \ {\rm CHC}^{1}(E, -t-m).$$ 
	
The $E_1$-page of this spectral sequence can be seen as
\[ \begin{tikzcd}
E^{0,0}_{1}(1) \ar{r}{d^{0,0}_{1}} & E^{1,0}_{1}(1) \ar{r} & 0 \ar{r} & 0 \\
E^{0,-1}_{1}(1) \ar{r}{d^{0,-1}_{1}} & E^{1,-1}_{1}(1) \ar{r}{d^{1,-1}_{1}} & E^{2,-1}_{1}(1) \ar{r} & 0
\end{tikzcd} \] here $E^{2,0}_{1}(1)={\rm CH}^{1}(E^{[2]})=0$ for codimension reasons, because ${\rm dim} \ E^{[2]}=0$ and $E^{[t]}=\emptyset$ for $t > 2$. The differentials are given by alternating sums of pull-backs of the face maps. More explicitly
\[ \begin{tikzcd}
{\rm CH}^{1}(E^{[0]})  \ar{r}{d^{0,0}_{1}} \ar[dashed]{drr}{d^{0,0}_{2}} & {\rm CH}^{1}(E^{[1]}) \ar{r} & 0 \ar{r} & 0 \\
{\rm CH}^{1}(E^{[0]}, 1) \ar{r}[swap]{d^{0,-1}_{1}} & {\rm CH}^{1}(E^{[1]}, 1) \ar{r}[swap]{d^{1,-1}_{1}} & {\rm CH}^{1}(E^{[2]}, 1) \ar{r} & 0
\end{tikzcd} \] 

By Lemma \ref{lem}, the complex $E^{\bullet, -1}_{1}(1)$ of three terms is equivalent to the cochain complex that computes the cohomology of $\Gamma(E)$. Then, the bottom part of the $E_{2}$-page is the cohomology of the dual complex, i.e., $E^{t, -1}_{2}(1) \cong H^{t}(\Gamma(E), \mathbb{Z}) \otimes \mathbb{C}^{\ast}$. There is a differential that is possibly non-zero on the $E_{2}$-page, that is, $d^{0,0}_{2}\colon {\rm Ker}(d^{0,0}_{1}) \rightarrow {\rm Coker}(d^{1,-1}_{1})$. But $H^{2}(\Gamma)=0$, and this implies that $d^{1,-1}_{1}$ is a surjective map, so $d^{0,0}_{2}=0$ and we have $E^{2,-1}_{2}=E^{2,-1}_{\infty}$. Then, the spectral sequence reduces to a short exact sequence of the form:
\[\begin{tikzcd}
0 \ar{r} & E^{2,-1}_{2}(1) \ar{r} & {\rm CHC}^{1}(E, -1) \ar{r} & E^{1, 0}_{2}(1) \ar{r} & 0
\end{tikzcd}\] 

But $E^{2, -1}_{2}(1)={\rm Coker}(d^{1,-1}_{1})=0$, so that ${\rm CHC}^{1}(E, -1) \cong E^{1, 0}_{2}(1)={\rm Coker}\big({\rm CH}^{1}(E^{[0]})  \rightarrow {\rm CH}^{1}(E^{[1]})\big)$. In degree zero we have the following exact sequence:
\[\begin{tikzcd}
0 \ar{r} & E^{1,-1}_{2}(1) \ar{r} & {\rm CHC}^{1}(E) \ar{r} & E^{0, 0}_{2}(1) \ar{r} & 0
\end{tikzcd}\] 

which are the only terms that are not zero. This completes the proof.
\end{proof} 


The complete semi-simplicial hyperresolution of $X$ is given by
\[ \begin{tikzcd}[column sep=2em]
	E^{[2]} \ar[shift left=0.6em]{r} \ar{r} \ar[shift right=0.6em]{r} & E^{[1]} \ar[shift left=0.3em]{r} \ar[shift right=0.3em]{r} & E^{[0]} \ar{d} \\ 
	& & E \ar[shift left=0.3em]{r} \ar[shift right=0.3em]{r} & X_{0}=\widetilde{X} \sqcup X_{\rm sing} \ar{r} & X \\ X_{3}=E^{[2]} \ar[shift left=0.9em]{r}\ar[shift left=0.3em]{r}\ar[shift right=0.3em]{r}\ar[shift right=0.9em]{r} & X_{2}=E^{[1]} \ar[shift left=0.6em]{r} \ar{r} \ar[shift right=0.6em]{r} & X_{1}=E^{[0]} \ar[shift left=0.3em]{r} \ar[shift right=0.3em]{r} & X_{0}=\widetilde{X} \sqcup X_{\rm sing} \ar{r} & X
\end{tikzcd} \]

\begin{tikzpicture}[overlay, remember picture]
	\draw[thick] (2.0, 1.29) -- (14.3, 1.29); 
\end{tikzpicture} 

\begin{theo}\label{te}
Let $X$ be an irreducible projective variety of dimension 3 over $\mathbb{C}$, with isolated singularities. Suppose that $H^{2}(\Gamma(E))=0$ for the dual complex $\Gamma(E)$ associated to the simple normal crossing divisor $E$. Then ${\rm CHC}^{1}(X, m)=0$ for $m \neq -2,-1, 0, 1$, ${\rm CHC}^{1}(X, 1)=\mathbb{C}^{\ast}$, and there is an exact sequence
\[ \begin{tikzcd}
0 \ar{r} & {\rm CHC}^{1}(X) \ar{r} & {\rm CH}^{1}(\widetilde{X}) \ar{r} & {\rm CHC}^{1}(E) \ar{r} & {\rm CHC}^{1}(X,-1) \ar{r} & 0 
\end{tikzcd} \]
\end{theo}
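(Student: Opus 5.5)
The plan is to use the Mayer--Vietoris type long exact sequence (\ref{eq1}) attached to the 2-resolution $E \rightrightarrows \widetilde{X}\sqcup X_{\rm sing}\to X$, and to feed into it the information on ${\rm CHC}^{1}(E,m)$ from Proposition \ref{norma}. First I will reduce to a single singular point. Since $X_{\rm sing}=\{x_1,\dots,x_s\}$ is finite, $E=\bigsqcup_k p^{-1}(x_k)$ and ${\rm CHC}^{1}(E,m)=\bigoplus_k {\rm CHC}^{1}(p^{-1}(x_k),m)$, so the argument runs componentwise. By Zariski's main theorem ($X$ normal, or at least after normalization) each $p^{-1}(x_k)$ is connected. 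The hypothesis $H^{2}(\Gamma)=0$ is read componentwise, and Proposition \ref{norma} applies to each component. For the vanishing range, Proposition \ref{codim} gives ${\rm CHC}^{1}(X,m)=0$ for $m\ge 2$. Proposition \ref{dim} gives ${\rm CHC}^{1}(X,m)=0$ for $m<-2$ and ${\rm CHC}^{1}(E,m)=0$ for $m<-1$. Together these give vanishing outside $\{-2,-1,0,1\}$.

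Next come the identifications in degree $1$. The points contribute ${\rm CH}^{1}(X_{\rm sing},m)={\rm CH}^{1}(X_{\rm sing})=0$ for $m\le 0$ and $(\mathbb{C}^{\ast})^{s}$ for $m=1$, while ${\rm CH}^{1}(\widetilde{X},1)=\Gamma(\widetilde{X},\mathcal{O}^{\ast})=\mathbb{C}^{\ast}$ because $\widetilde{X}$ is irreducible and projective. By Proposition \ref{norma}, ${\rm CHC}^{1}(E,1)=(\mathbb{C}^{\ast})^{s}$. The start of (\ref{eq1}) then reads
\[0\to {\rm CHC}^{1}(X,1)\to \mathbb{C}^{\ast}\oplus(\mathbb{C}^{\ast})^{s}\xrightarrow{\ \phi\ } (\mathbb{C}^{\ast})^{s}\to {\rm CHC}^{1}(X)\to \cdots.\]
The key check is the map $\phi$. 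On the $k$-th factor it is $(\lambda,\mu_k)\mapsto \lambda\mu_k^{-1}$: constants pulled back from $\widetilde{X}$ and from the point agree on the connected $p^{-1}(x_k)$, and ${\rm CHC}^{1}(p^{-1}(x_k),1)=\mathbb{C}^{\ast}$ is the constants, i.e.\ $H^{0}(\Gamma)\otimes\mathbb{C}^{\ast}$ in the spectral sequence. Already the $\mu$-part makes $\phi$ surjective, and its kernel is the diagonal $\mathbb{C}^{\ast}$. Hence ${\rm CHC}^{1}(X,1)=\mathbb{C}^{\ast}$ and the connecting map ${\rm CHC}^{1}(E,1)\to{\rm CHC}^{1}(X)$ is zero. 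Verifying this compatibility of the identification from Proposition \ref{norma} with pullbacks is the step I expect to need the most care.

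With that in hand, (\ref{eq1}) continues as
\[0\to{\rm CHC}^{1}(X)\to{\rm CH}^{1}(\widetilde{X})\to{\rm CHC}^{1}(E)\to{\rm CHC}^{1}(X,-1)\to {\rm CH}^{1}(X_0,-1)=0,\]
where the last group vanishes because $X_0$ is smooth and higher Chow groups of smooth varieties vanish in negative degree. This is exactly the claimed four-term exact sequence. Finally, (\ref{eq1}) also gives ${\rm CHC}^{1}(X,-2)\cong{\rm CHC}^{1}(E,-1)$. This group may be nonzero, since Proposition \ref{norma} identifies it with ${\rm Coker}({\rm CH}^{1}(E^{[0]})\to{\rm CH}^{1}(E^{[1]}))$, which is why $m=-2$ is kept in the allowed range.
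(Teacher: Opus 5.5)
Your proposal is correct and is essentially the paper's own proof: you feed Propositions \ref{codim}, \ref{dim} and \ref{norma} into the sequence \eqref{eq1}, observe that $\mathbb{C}^{\ast}\oplus(\mathbb{C}^{\ast})^{s}\to(\mathbb{C}^{\ast})^{s}$ is surjective with diagonal kernel, and use ${\rm CH}^{1}(X_{0},-1)=0$; your treatment of several singular points and your explicit description of $\phi$ make explicit what the paper's one-line argument leaves implicit (it is written as $(\mathbb{C}^{\ast})^{\oplus 2}\to\mathbb{C}^{\ast}$). One caveat: both you and the paper (through the connectedness hypothesis in Proposition \ref{norma}) need each fiber $p^{-1}(x_k)$ to be connected, and this really requires $X$ to be normal, so ``after normalization'' cannot replace that assumption, because ${\rm CHC}^{1}$ is not invariant under normalization.
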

\begin{proof}
By Proposition \ref{norma} and previous results, the long exact sequence \eqref{eq1} can be reduced to the following exact sequence:
\[ \begin{tikzcd}[row sep=0.6em]
0 \ar{r} & {\rm CHC}^{1}(X,1) \ar{r} & (\mathbb{C}^{\ast})^{\oplus 2} \ar{r} & \mathbb{C}^{\ast} \ar{r} & {\rm CHC}^{1}(X) \ar{r} & {\rm CH}^{1}(\widetilde{X}) \\ \ar{r} & {\rm CHC}^{1}(E) \ar{r} & {\rm CHC}^{1}(X,-1) \ar{r} & 0 \ar{r} & {\rm CHC}^{1}(E,-1) \ar{r} & {\rm CHC}^{1}(X,-2) \ar{r} & 0
\end{tikzcd} \] 

Since the higher Chow groups of smooth varieties vanish in negative degrees, and ${\rm CH}^{1}(X_{\rm sing})=0$, the corollary follows immediately. 
\end{proof} 

Examples of this type of varieties can be seen in \cite[Theorem 2.2]{Step}.

\subsection{Varieties of higher dimension}
\noindent Let $X$ be a complex projective algebraic variety of dimension $d$ with isolated singularities $X_{\rm sing}$. In this case we have that
$${\rm CH}^{1}(X_{\rm sing}, 1)=\Gamma(X_{\rm sing}, \mathcal{O}^{\ast}_{X_{\rm sing}}) = (\mathbb{C}^{\ast})^{\oplus \ \# (X_{\rm sing})}.$$

Let $p\colon \widetilde{X} \rightarrow X$ be a resolution of singularities of $X$ such that $E=p^{-1}(X_{\rm sing})$ is smooth. Then, the semi-simplicial hyperresolution of $X$ is given by the following diagram:
\[ \begin{tikzcd}[column sep=2em]
	X_{1}=E \ar[shift left=0.3em]{r} \ar[shift right=0.3em]{r} & X_{0}=\widetilde{X} \sqcup X_{\rm sing} \ar{r} & X.
\end{tikzcd} \] 

Thus, the spectral sequence $E^{p,q}_{1}(1):={\rm CH}^{1}(X_{p}, -q) \Rightarrow {\rm CHC}^{1}(X, -p-q)$ defines a complex
\[ \begin{tikzcd}
	E^{\bullet, q}_{1}(1) \ \colon \ 0 \ar{r} & {\rm CH}^{1}(X_{0}, -q) \ar{r} & {\rm CH}^{1}(X_{1}, -q) 
\end{tikzcd} \] for $q=-1, 0$. Then we have a long exact sequence
\[ \begin{tikzcd}[row sep=0.6em]
	0 \ar{r} & {\rm CHC}^{1}(X, 1) \ar{r} & \Gamma(\widetilde{X}, \mathcal{O}^{\ast}_{\widetilde{X}}) \oplus \Gamma(X_{\rm sing}, \mathcal{O}^{\ast}_{X_{\rm sing}}) \ar{r} & \Gamma(E, \mathcal{O}^{\ast}_{E}) \ar{r} & {\rm CHC}^{1}(X) \\ \ar{r} & {\rm CH}^{1}(\widetilde{X}) \ar{r} & {\rm CH}^{1}(E) \ar{r} & {\rm CHC}^{1}(X, -1) \ar{r} & 0.
\end{tikzcd} \]

\noindent In the general case, if $E$ is a simple normal crossing divisor (non-smooth), the cohomological cycle complex is given by
$$\mathcal{Z}^{r}(X, \bullet)^{\ast}:={\rm Cone}^{\bullet}\big[\mathcal{Z}^{r}(\widetilde{X}, \bullet) \oplus \mathcal{Z}^{r}(X_{\rm sing}, \bullet) \ \longrightarrow \ \mathcal{Z}^{r}(E, \bullet)^{\ast}\big][-1].$$ 

Recall that by induction on the hyperresolution, we can see that ${\rm CHC}^{1}(X, m)=0$ if $m \geq 2$ by Proposition \ref{codim}. In particular, we have a long exact sequence
\[ \begin{tikzcd}[row sep=0.6em]
0 \ar{r} & {\rm CHC}^{1}(X, 1) \ar{r} & {\rm CH}^{1}(\widetilde{X}, 1) \oplus {\rm CH}^{1}(X_{\rm sing}, 1) \ar{r} & {\rm CHC}^{1}(E, 1)  \ar{r} & {\rm CHC}^{1}(X) \\ \ar{r} & {\rm CH}^{1}(\widetilde{X}) \ar{r} & {\rm CHC}^{1}(E) \ar{r} & {\rm CHC}^{1}(X, -1) \ar{r} & 0 \\ \ar{r} & {\rm CHC}^{1}(E, -1) \ar{r} & {\rm CHC}^{1}(X, -2) \ar{r} & 0 \ar{r} & \cdots \\ \ar{r} & {\rm CHC}^{1}(E, -d+2) \ar{r} & {\rm CHC}^{1}(X, -d+1) \ar{r} & 0
\end{tikzcd} \] 

Thus we have ${\rm CHC}^{1}(E, m) \cong {\rm CHC}^{1}(X, m-1)$ for $m < 0$. In general ${\rm CHC}^{1}(X, m)=0$ if $1 > {\rm dim} \ X +m$ by Proposition \ref{dim}. Then we have to see where the groups ${\rm CHC}^{1}(E, m)$ are concentrated. Let us consider the semi-simplicial hyperresolution $E^{[\bullet]} \rightarrow E$, associated to the simple normal crossing divisor $E$. Then $E^{[t]}=\emptyset$ for $t > {\rm dim} \ X - 1$, otherwise ${\rm dim} \ E^{[t]}={\rm dim} \ X - 1 - t$. The spectral sequence associated is
$$E^{t, m}_{1}(1):={\rm CH}^{1}(E^{[t]}, -m) \Rightarrow {\rm CHC}^{1}(E, -t-m)$$ where $E^{\bullet, m}_{1}(1)$ is the complex ${\rm CH}^{1}(E^{[\bullet]}, -m)$; explicitly
\[ \begin{tikzcd}
	E^{\bullet, m}_{1}(1) \colon 0 \ar{r} & {\rm CH}^{1}(E^{[0]}, -m) \ar{r} &  {\rm CH}^{1}(E^{[1]}, -m) \ar{r} & {\rm CH}^{1}(E^{[2]}, -m) \ar{r} & \cdots
\end{tikzcd} \] for $m=0, -1$. The $E_{1}$- page of the spectral sequence is
\[ \begin{tikzcd}
	E^{0, 0}_{1} \ar{r}{d^{0,0}_{1}} \ar[dashed]{drr}{d^{0,0}_{2}} & E^{1, 0}_{1} \ar{r}{d^{1,0}_{1}} & E^{2, 0}_{1} \ar{r}{d^{2,0}_{1}} & \dots \ar{r} & E^{d-3, 0}_{1} \ar{r}{d^{d-3,0}_{1}} \ar[dashed]{drr}{d^{d-3,0}_{2}} & E^{d-2, 0}_{1} \ar{r} & 0 \ar{r} & 0 \\
	E^{0, -1}_{1} \ar{r}[swap]{d^{0,-1}_{1}} & E^{1, -1}_{1} \ar{r}[swap]{d^{1,-1}_{1}} & E^{2, -1}_{1} \ar{r}[swap]{d^{2,-1}_{1}} & \dots \ar{r} & E^{d-3, -1}_{1} \ar{r}[swap]{d^{d-3,-1}_{1}} & E^{d-2, -1}_{1} \ar{r}[swap]{d^{d-2,-1}_{1}} & E^{d-1, -1}_{1} \ar{r} & 0
\end{tikzcd} \] where the differentials on the second page are $d^{t,0}_{2}\colon E^{t,0}_{2}(1) \rightarrow E^{t+2,-1}_{2}(1)$, for $t=0, 1, \ldots, d-3$.

\begin{pro}\label{tee}
Let $E$ be a simple normal crossing divisor with irreducible intersections on a normal variety $X$ of dimension $d$, with associated dual complex $\Gamma(E)$ contractible. Then:
\begin{displaymath}
	{\rm CHC}^{1}(E,m)= \left\{ \begin{array}{lll}
		\mathbb{C}^{\ast}, & \quad {\rm if} \ m=1 \\
		H^{-m}({\rm CH}^{1}\big(E^{[\bullet]})\big), & \quad {\rm if} \ -d+2 \leq m \leq 0 \\
		0, & \quad {\rm otherwise}.
	\end{array} \right. 
\end{displaymath}
\end{pro}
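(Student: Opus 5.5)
We use the spectral sequence of the semi-simplicial hyperresolution $E^{[\bullet]}\rightarrow E$,
$$E^{t,m}_{1}(1)={\rm CH}^{1}(E^{[t]},-m)\Rightarrow {\rm CHC}^{1}(E,-t-m),$$
as in the proof of Proposition \ref{norma}.

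First we locate the nonzero terms. By \eqref{eqn:BLO}, only the rows $m=0$ and $m=-1$ of the $E_1$-page can be nonzero. Since ${\rm dim}\,E^{[t]}=d-1-t$, the row $m=0$ is supported in $0\le t\le d-2$: for $t=d-1$ the variety $E^{[d-1]}$ is a finite set of points, so its Picard group vanishes. The row $m=-1$ is supported in $0\le t\le d-1$.

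Next we compute the bottom row. By Lemma \ref{lem}, with irreducible intersections the row $E^{\bullet,-1}_{1}(1)$ is identified with $C^{\bullet}(\Gamma)\otimes\mathbb{C}^{\ast}$. Hence $E^{t,-1}_{2}(1)=H^{t}(C^{\bullet}(\Gamma)\otimes\mathbb{C}^{\ast})$. Since $\Gamma(E)$ is contractible, its cohomology is $\mathbb{Z}$ in degree $0$ and $0$ otherwise, and all groups are free. By the universal coefficient theorem (there is no Tor because $H^{\ast}(\Gamma,\mathbb{Z})$ is free), $E^{0,-1}_{2}=\mathbb{C}^{\ast}$ and $E^{t,-1}_{2}=0$ for $t\ge1$.

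We then show the spectral sequence degenerates. For $r\ge2$, the differentials $d_r$ go from row $0$ to row $1-r\le -1$. The only possible targets are $E^{t+2,-1}_{2}=0$ for $r=2$, and rows below $-1$ for $r\ge 3$, which vanish. Differentials into row $-1$ from row $0$ are exactly these $d_2$'s, and their targets lie in degrees $t+2\ge2$, where the row is zero. So all differentials from $E_2$ onward vanish and $E_2=E_\infty$. On each antidiagonal there is at most one nonzero term: the $m=-1$ row contributes only at $t=0$, i.e. total degree ${\rm CHC}^1(E,1)$, while the $m=0$ row contributes to ${\rm CHC}^1(E,-t)$. Thus there are no extension problems, and we get:
\begin{itemize}
\item ${\rm CHC}^{1}(E,1)=E^{0,-1}_{2}=\mathbb{C}^{\ast}$;
\item ${\rm CHC}^{1}(E,m)=E^{-m,0}_{2}=H^{-m}({\rm CH}^{1}(E^{[\bullet]}))$ for $0\le -m\le d-2$;
\item ${\rm CHC}^{1}(E,m)=0$ for all other $m$, including $m\ge2$ (consistent with Proposition \ref{codim}).
\end{itemize}

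The main point requiring care is the identification in Lemma \ref{lem}. The signs of the alternating face maps must match the simplicial coboundary, and each $E_{I}$ must be connected, which is guaranteed by the irreducible-intersections hypothesis. With that in place, the degeneration is purely formal.
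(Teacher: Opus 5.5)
Your proposal is correct and follows the same route as the paper: the spectral sequence of $E^{[\bullet]}\to E$, Lemma \ref{lem} to identify the row $m=-1$ with $C^{\bullet}(\Gamma)\otimes\mathbb{C}^{\ast}$, contractibility of $\Gamma(E)$ to kill $E_2^{t,-1}$ for $t\geq 1$, and then reading off ${\rm CHC}^{1}(E,m)$ from the surviving terms. Your explicit degeneration argument and your observation that there are no extension problems are cleaner than the paper's version. The paper only sketches one exact sequence, and writes $E_1^{d-2,0}$ where $E_2^{d-2,0}$ is meant.
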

\begin{proof}
By Lemma \ref{lem}, the $E_{1}$-page of the spectral sequence is given by:
\[ \begin{tikzpicture}
	\draw[->] (-0.7,0) -- (15,0) node[right] {$t$};
	\draw[->] (0,-1) -- (0,1.5) node[above] {$m$};
	
	\node (H0) at (1,0.6) {${\rm CH}^{1}(E^{[0]})$};
	\node (H1) at (3.7,0.6) {${\rm CH}^{1}(E^{[1]})$};
	\node (H) at (6,0.6) {$\cdots$};
	\node (Hdd) at (8.5,0.6) {${\rm CH}^{1}(E^{[d-2]})$};
	\node (0) at (11.5,0.6) {$0$};
	\node (00) at (13.5,0.6) {$0$};
	\node (HzeroLeft) at (-0.3,0.6) {$0$};
	
	\draw[->] (H0) -- (H1);
	\draw[->] (H1) -- (H);
	\draw[->] (H) -- (Hdd);
	\draw[->] (Hdd) -- (0);
	\draw[->] (0) -- (00);
	
	\node (tminus1) at (-0.3,-0.6) {$-1$};
    	\node (C0) at (1,-0.6) {$C^{0}(\Gamma) \otimes \mathbb{C}^{\ast}$};
    \node (C1) at (3.7,-0.6) {$C^{1}(\Gamma) \otimes \mathbb{C}^{\ast}$};
    \node (C) at (6,-0.6) {$\cdots$};
    \node (Cdd) at (8.5,-0.6) {$C^{d-2}(\Gamma) \otimes \mathbb{C}^{\ast}$};
    \node (Z) at (11.5,-0.6) {$C^{d-1}(\Gamma) \otimes \mathbb{C}^{\ast}$};
    \node (Zz) at (13.5,-0.6) {$0$};
    
    \draw[->] (C0) -- (C1);
    \draw[->] (C1) -- (C);
    \draw[->] (C) -- (Cdd);
    \draw[->] (Cdd) -- (Z);
    \draw[->] (Z) -- (Zz);
\end{tikzpicture} \] and $E^{t,0}_{2}(1)$ is just the cohomology in degree $t$ of the complex $E^{\bullet, 0}_{1}(1)={\rm CH}^{1}(E^{[\bullet]})$, i.e., $E^{t,0}_{2}(1)=H^{t}({\rm CH}^{1}(E^{[\bullet]}))$ for $t=0, 1, \ldots, d-2$. Since $\Gamma(E)$ is contractible, the complex $E^{\bullet, -1}_{2} \cong H^{t}(\Gamma(E), \mathbb{Z}) \otimes \mathbb{C}^{\ast}$ is given by:
\[ \begin{tikzcd}
E^{0, -1}_{2}(1) \ar{r} & E^{1, -1}_{2}(1) \ar{r} & \cdots \ar{r} & E^{d-2, -1}_{1}(1) \ar{r} & E^{d-2, -1}_{1}(1) \ar{r} & 0
\end{tikzcd} \] is exact, and $E^{t, -1}_{2}(1) \neq 0$ only in $t=0$. Then, the exact sequence:
\[ \begin{tikzcd}
E^{d-3, 0}_{2}(1) \ar{r} & E^{d-1, -1}_{2}(1) \ar{r} & {\rm CHC}^{1}(E, -d+2) \ar{r} & E^{d-2, 0}_{2}(1) \ar{r} & 0
\end{tikzcd} \] induces an isomorphism ${\rm CHC}^{1}(E, -d+2) \cong E^{d-2, 0}_{1}(1)$, and in general ${\rm CHC}^{1}(E, m) \cong H^{-m}\big({\rm CH}^{1}(E^{[\bullet]})\big)$. Finally, ${\rm CHC}^{1}(E, 1) \cong E^{0,-1}_{2}(1)=\mathbb{C}^{\ast}$.
\end{proof}

The condition considered in this Proposition is different from the one used in Proposition \ref{norma}. There, the hypothesis is slightly weaker, since it only considers that $H^{2}(\Gamma)=0$ instead of being contractible. Although here it is also sufficient to require that the homology is zero, in some cases it is equivalent to requiring that the complex $\Gamma(E)$ is contractible \cite[Theoren 3.1]{Step}. Finally, we have the following result:

\begin{theo}\label{gen}
Let $X$ be a complex projective variety of dimension $d$, with isolated singularities $X_{\rm sing}$. Let $p\colon \widetilde{X} \rightarrow X$ be a resolution of singularities, such that $E=p^{-1}(X_{\rm sing})$ the simple normal crossing divisor with irreducible intersections and associated dual complex $\Gamma(E)$ contractible. Then ${\rm CHC}^{1}(X,m) \neq 0$ for $m=1, 0, -1, \ldots, d-1$, ${\rm CHC}^{1}(X, 1)=\mathbb{C}^{\ast}$, and there is an exact sequence:
\[ \begin{tikzcd}
0 \ar{r} & {\rm CHC}^{1}(X) \ar{r} & {\rm CH}^{1}(\widetilde{X}) \ar{r} & {\rm CHC}^{1}(E) \ar{r} & {\rm CHC}^{1}(X, -1) \ar{r} & 0.
\end{tikzcd} \] 
\end{theo}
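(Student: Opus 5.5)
The plan is to run the same argument as in Theorem \ref{te}, now fed by Proposition \ref{tee} in place of Proposition \ref{norma}. I will establish three things: the value ${\rm CHC}^{1}(X,1)=\mathbb{C}^{\ast}$, the four-term exact sequence, and an explicit identification of every remaining group ${\rm CHC}^{1}(X,m)$ for $m=-1,\ldots,-d+1$. I read the index range in the statement as $m=1,0,-1,\ldots,-d+1$, since ${\rm CHC}^{1}(X,m)=0$ for $m\le -d$ by Proposition \ref{dim} and for $m\geq 2$ by Proposition \ref{codim}.

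First I write the Mayer--Vietoris long exact sequence coming from the cone description (\ref{cycom}) of $\mathcal{Z}^{1}(X,\bullet)^{\ast}$ with $X_{0}=\widetilde{X}\sqcup X_{\rm sing}$. Since $\widetilde{X}$ and $X_{\rm sing}$ are smooth, ${\rm CH}^{1}(\widetilde{X},m)$ and ${\rm CH}^{1}(X_{\rm sing},m)$ vanish for $m<0$. Since $X_{\rm sing}$ is a finite set of points, ${\rm CH}^{1}(X_{\rm sing})=0$ and ${\rm CH}^{1}(X_{\rm sing},1)=(\mathbb{C}^{\ast})^{\#X_{\rm sing}}$. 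The sequence therefore splits into the following pieces:
\begin{itemize}
\item an upper piece running from ${\rm CHC}^{1}(X,1)$ to ${\rm CHC}^{1}(X,-1)\to 0$;
\item isomorphisms ${\rm CHC}^{1}(X,m-1)\cong {\rm CHC}^{1}(E,m)$ for $m<0$.
\end{itemize}
Proposition \ref{tee} gives ${\rm CHC}^{1}(E,1)=\mathbb{C}^{\ast}$, and I will apply it to each connected component of $E$ lying over a point of $X_{\rm sing}$. The map
\[
\mathbb{C}^{\ast}\oplus(\mathbb{C}^{\ast})^{\#X_{\rm sing}}\longrightarrow (\mathbb{C}^{\ast})^{\#X_{\rm sing}}
\]
is then the difference of the restriction of constants, which is surjective with kernel the diagonal $\mathbb{C}^{\ast}$ (using that $X$, hence $\widetilde{X}$, is irreducible). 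So ${\rm CHC}^{1}(X,1)=\mathbb{C}^{\ast}$, the connecting map ${\rm CHC}^{1}(E,1)\to{\rm CHC}^{1}(X)$ is zero, and the remaining terms give the exact sequence $0\to{\rm CHC}^{1}(X)\to{\rm CH}^{1}(\widetilde{X})\to{\rm CHC}^{1}(E)\to{\rm CHC}^{1}(X,-1)\to 0$.

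For the negative degrees I combine the isomorphisms above with Proposition \ref{tee}. This gives
\[
{\rm CHC}^{1}(X,m-1)\cong H^{-m}\big({\rm CH}^{1}(E^{[\bullet]})\big)\quad\text{for } -d+2\le m\le -1 ,
\]
and ${\rm CHC}^{1}(X,-1)$ is the cokernel of ${\rm CH}^{1}(\widetilde{X})\to H^{0}({\rm CH}^{1}(E^{[\bullet]}))$. Degrees below $-d+1$ vanish by Proposition \ref{dim}. Hence the only degrees where ${\rm CHC}^{1}(X,m)$ can be nonzero are $m=1,0,-1,\ldots,-d+1$, and there it is given by the above formulas. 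Nonvanishing holds outright at $m=1$.

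The main obstacle is the literal nonvanishing claim in degrees $m\le 0$, which needs nonvanishing of $H^{t}({\rm CH}^{1}(E^{[\bullet]}))$ and of the cokernel above. I would first try to detect it through the degree maps ${\rm CH}^{1}(E^{[t]})\to\mathbb{Z}^{\pi_0}$ on the top strata $E^{[d-2]}$, which are curves. This compares the complex ${\rm CH}^{1}(E^{[\bullet]})$ with a combinatorial complex on $\Gamma(E)$ and might exhibit nonzero classes, for example in $H^{d-2}$. I expect that in full generality one can only conclude that the groups are \emph{possibly} nonzero in exactly these degrees. If the degree argument fails, the proof should record this concentration statement together with the explicit identifications, rather than claim strict nonvanishing.
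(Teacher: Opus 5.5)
Your proposal is correct and is essentially the argument the paper intends. The paper gives no separate proof of Theorem \ref{gen}; it is meant to follow exactly as Theorem \ref{te} does, from the long exact sequence displayed before Proposition \ref{tee}, with Proposition \ref{tee} in place of Proposition \ref{norma}. Your explicit check that $\mathbb{C}^{\ast}\oplus(\mathbb{C}^{\ast})^{\#X_{\rm sing}}\to{\rm CHC}^{1}(E,1)$ is surjective with diagonal kernel is more careful than the paper's treatment. Note that contractibility of $\Gamma(E)$ already forces $E$ to be connected, hence $X_{\rm sing}$ to be a single point; the several-point version needs each fibre $p^{-1}(x)$ to be connected, e.g.\ by normality.

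Your reading of the degree range as a concentration statement is the right one, because the literal nonvanishing is false. If $E$ is a single smooth divisor (dual complex a point), then ${\rm CHC}^{1}(E,m)={\rm CH}^{1}(E,m)=0$ for $m\leq -1$, so ${\rm CHC}^{1}(X,m)=0$ for all $m\leq -2$ as soon as $d\geq 3$.
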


\section{Acknowledgements}
\noindent I would like to begin by thanking Pedro Luis del Ángel for introducing me to these topics. To Alexey Beshenov and Omar Antolín-Camarena for the conversations and suggestions they gave me during the final stages of this work. I would also like to thank everyone who helped me along the way: Jaime Hernández, Daniel Duarte, and Eric Yen-Yo Chen. Finally, I would like to thank SECIHTI and its ``Estancias Posdoctorales por México" program for giving me the opportunity to continue my research. I would also like to thank the ``Instituto de Matemáticas-UNAM" for its hospitality.

\bibliographystyle{alpha}
\bibliography{Reff2}

\end{document}